\newtheorem{theorem}{Theorem}[section]
\newtheorem{lemma}{Lemma}[section]
\theoremstyle{definition}
\newtheorem{corollary}{Corollary}[section]
\newtheorem{proposition}{Proposition}[section]
\newtheorem{conjecture}{Conjecture}[section]
\theoremstyle{remark}
\newtheorem{remark}{Remark}[section]
\numberwithin{equation}{section}
\newcommand{\Mod}[1]{\ (\mathrm{mod}\ #1)}
\DeclareRobustCommand{\stirling}{\genfrac\{\}{0pt}{}}
\renewcommand{\leq}{\leqslant}
\renewcommand{\geq}{\geqslant}
\begin{document}

 \title[Moments of primes in progressions]{Moments of primes in progressions to a large modulus}


\author{}
\address{}
\curraddr{}
\email{}
\thanks{}

\author{Sun-Kai Leung}
\address{D\'epartement de math\'ematiques et de statistique\\
Universit\'e de Montr\'eal\\
CP 6128 succ. Centre-Ville\\
Montr\'eal, QC H3C 3J7\\
Canada}
\curraddr{}
\email{sun.kai.leung@umontreal.ca}
\thanks{}


\subjclass[2020]{11N05, 11N13, 60F05}

\date{}

\dedicatory{}

\keywords{}

\begin{abstract}
Assuming a uniform $q$-variant of the prime $k$-tuple conjecture, we compute moments of the number of primes in arithmetic progressions to a large modulus $q$ 
as the residue classes vary. Consequently, depending on the size of $\varphi(q)$, the prime count follows either a Gaussian or a Poisson distribution. In particular, the least prime in progressions follows an exponential distribution, with some unexpected discrepancies observed for smooth moduli.
\end{abstract}

\maketitle


\section{Introduction}

The study of primes in arithmetic progressions began with Dirichlet's celebrated theorem on the infinitude of primes in 1837. It states that if $(a,q)=1,$ i.e., admissible, then there are infinitely many primes $p \equiv a \Mod{q}.$ A century later, building on Siegel's theorem on Dirichlet $L$-functions, Walfisz (1936) provided an estimate of the number of primes up to $N$ in admissible arithmetic progressions modulo $q$ provided that $q \leq (\log N)^A,$ now known as the Siegel–Walfisz theorem, i.e., if $(a,q)=1,$ then
\begin{align*}
\psi(N;q,a):=\sum_{\substack{1 \leq n \leq N\\n \equiv a \Mod{q}}} \Lambda(n) 
= \frac{N}{\varphi(q)}+O ( N \exp ( -c_A \sqrt{\log N} ) )
\end{align*}
as $N \to \infty,$ where $\Lambda$ is the von Mangoldt function.

However, due to our limited understanding of potential Siegel zeros (if they exist), primes in arithmetic progressions to large moduli remain mysterious, especially when $\log q \asymp \log N.$ In this direction, Linnik (1944) unconditionally proved the existence of absolute constants $c, L>0$ such that the least prime in an (admissible) arithmetic progression modulo $q$ is at most $ c q^L.$ Nevertheless, a natural but ambitious question arises: given a large modulus $q$, what is the distribution of primes in arithmetic progressions as $a \Mod{q}$ varies? In particular, one has to compute the variance
\begin{align*}
V(N;q):=\sideset{}{^*}\sum_{a \Mod{q}} \left( \psi(N;q,a)-\frac{N}{\varphi(q)} \right)^2.
\end{align*}

The mean of $V(N;q)$ over all moduli $q \leq Q$ has a long history, and related results are commonly referred to as the ``Barban--Davenport--Halberstam" type theorems, notably by Hooley, Montgomery and Vaughan in a series of papers. For individual $V(N;q),$ however, much less is known. Hooley \cite[p. 536]{MR506067} conjectured that $V(N;q) \sim N\log q$ in some unspecified range of $q \leq N.$ Assuming the Riemann hypothesis and a uniform Hardy--Littlewood conjecture on prime pairs, Friedlander and Goldston \cite{MR1412558} showed that the asymptotic holds for $q \in [N^{\frac{1}{2}+\epsilon}, N].$ Recently, Fiorilli \cite{MR3356760} and de la Bret\`{e}che--Fiorilli \cite{MR4611407} have respectively studied variance and moments of primes in arithmetic progressions from a Fourier-analytic perspective (under the generalized Riemann hypothesis and the linear independence conjecture).

In this paper, we revisit the ``physical side". Adapting the method of Montgomery and Soundararajan \cite{MR2104891}, we compute moments of primes in arithmetic progressions under a uniform $q$-variant of the prime $k$-tuple conjecture.


Inspired by Granville's random model (see \cite[pp. 23-24]{MR1349149}), we propose the following conjecture, which can be viewed as a $q$-variant of the uniform prime $k$-tuple conjecture of Montgomery and Soundararajan \cite[Theorem 3]{MR2104891}.

\begin{conjecture}[Uniform $q$-variant of the Hardy--Littlewood prime $k$-tuple conjecture] \label{conj:ii}

Given integers $k, q, D, N \geq 1$ for which $\varphi(q) \leq N/\log N$ and $D \leq N,$ let $\mathcal{D} \subseteq [1,D]$ be a set consisting of $k$ distinct multiples of $q$. Then for any $1 \leq x \leq N,$ there exists an absolute constant $0<\delta<1/2$ such that
\begin{align*}
\sum_{\substack{1 \leq n \leq x\\ (n,q)=1}} \prod_{d \in \mathcal{D}}\Lambda(n+d)=
\mathfrak{S}(\mathcal{D};q) 
\left| \{ 1 \leq n \leq x \,:\, (n,q)=1 \} \right|+O_{k,\delta}(N^{1-\delta}),
\end{align*}
or equivalently (by partial summation) that
\begin{align*}
\sum_{\substack{1 \leq n \leq x\\(n,q)=1}} \prod_{d \in \mathcal{D}}1_{\mathbb{P}}(n+d) =
\mathfrak{S}(\mathcal{D};q)
\sum_{\substack{1 \leq n \leq x\\(n,q)=1}} 
\left( \prod_{d \in \mathcal{D}} \log (n+d) \right)^{-1}
+O_{k,\delta}(N^{1-\delta}),
\end{align*}
where $1_{\mathbb{P}}$ is the prime indicator function, and $\mathfrak{S}(\mathcal{D};q)$ is the singular series
defined as the Euler product
\begin{align*}
\left( \frac{\varphi(q)}{q} \right)^{-k}
\prod_{p \nmid q} \left( 1-\frac{1}{p} \right)^{-k}
\left( 1-\frac{v_p(\mathcal{D})}{p} \right)
\end{align*}
with $v_p(\mathcal{D}):=|\{ d \in \mathcal{D} \Mod{p} \}|.$

\end{conjecture}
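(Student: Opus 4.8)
The plan is to prove the first displayed identity, from which the second follows (and conversely) by a routine Abel summation against the slowly varying weight $\big(\prod_{d\in\mathcal D}\log(n+d)\big)^{\pm1}$: once the $O_k\big((N+D)^{1/2+o(1)}\big)$ contribution of terms in which some $n+d$ is a proper prime power is discarded, the partial-summation main term telescopes to $\mathfrak{S}(\mathcal D;q)\sum_{n\le N,\,(n,q)=1}\big(\prod_{d\in\mathcal D}\log(n+d)\big)^{-1}$, while the error remains $O_{k,\delta}\big((N+D)^{1-\delta}\big)$ because the weight has bounded total variation.

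First I would remove the constraint $(n,q)=1$. Writing $\mathcal D=\{d_1,\dots,d_k\}$ and $1_{(n,q)=1}=\sum_{e\mid q}\mu(e)1_{e\mid n}$, and noting that $q\le D$ (as $\mathcal D\subseteq[1,D]$ is a nonempty set of multiples of $q$), observe that $e\mid q\mid d_i$ forces $e\mid n+d_i$ for every $i$; hence for $e>1$ each $n+d_i$ must be a power of a prime dividing $e$, so only $O_{k,\epsilon}\big((N+D)^{\epsilon}\big)$ values of $n$ survive, and with $2^{\omega(q)}\ll_{\epsilon}(N+D)^{\epsilon}$ the terms $e>1$ contribute $O_{k,\epsilon}\big((N+D)^{\epsilon}\big)$ in total. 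Thus, up to an admissible error, the claim is equivalent to
\[
\sum_{1\le n\le N}\prod_{i=1}^{k}\Lambda(n+d_i)=\mathfrak{S}_{0}(\mathcal D)\,N+O_{k,\delta}\big((N+D)^{1-\delta}\big),\qquad \mathfrak{S}_{0}(\mathcal D)=\prod_{p}\Big(1-\tfrac1p\Big)^{-k}\Big(1-\tfrac{v_p(\mathcal D)}{p}\Big),
\]
and a short Euler-product computation --- using $v_p(\mathcal D)=1$ for $p\mid q$, since all $d_i\equiv0\Mod{q}$ --- identifies $\mathfrak{S}_{0}(\mathcal D)\,N$ with $\mathfrak{S}(\mathcal D;q)\,\big|\{1\le n\le N:(n,q)=1\}\big|$ up to $O\big(2^{\omega(q)}\big)$. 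In effect the conjecture is the classical Hardy–Littlewood $k$-tuple asymptotic with a power-saving error, made uniform over tuples $\mathcal D\subseteq[1,D]$ consisting of multiples of $q$ (with $D$ possibly far larger than $N$) and in $k$.

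For this asymptotic I would run the circle method: encoding the $k-1$ gap conditions by additive characters turns $\sum_{n\le N}\prod_i\Lambda(n+d_i)$ into a $(k-1)$-fold integral over $[0,1]^{k-1}$ of a product of the exponential sums $S(\alpha)=\sum_{m\le N+D}\Lambda(m)e(m\alpha)$ (with $e(x)=e^{2\pi i x}$), twisted by linear phases in the $d_i$. On the major arcs $\alpha=a/r+\beta$ with $r$ small, the standard expansion $S(a/r+\beta)\approx\frac{\mu(r)}{\varphi(r)}T(\beta)$ (with $T$ the corresponding character-free sum) produces, after summing the resulting Ramanujan sums over $r$, exactly $\mathfrak{S}_{0}(\mathcal D)$ times the archimedean volume $\sim N$, with a power-saving remainder; this part is routine and yields the stated main term rigorously.

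The hard part --- and the reason the statement is offered as a conjecture rather than a theorem --- will be the minor arcs. Already for $k=2$ one needs genuine cancellation in $\int_{\mathfrak m}S(\alpha)^2 e\big(-(d_2-d_1)\alpha\big)\,d\alpha$, uniformly in the shift, whereas the trivial bound $\big(\sup_{\mathfrak m}|S|\big)\int_0^1|S|^2\ll(N+D)^{9/5+o(1)}$ exceeds the target by a fixed power of $N+D$; equivalently, a Vaughan- or Heath-Brown-type decomposition of $\prod_i\Lambda(n+d_i)$ generates multilinear sums with $k\ge2$ unbalanced rough factors that resist any nontrivial estimate. This is the twin-prime barrier, so unconditionally only the case $k=1$ is accessible: there the identity is the prime number theorem for the interval $(d_1,N+d_1]$, and even then the power-saving error $O\big((N+D)^{1-\delta}\big)$ requires the Riemann hypothesis, the known zero-free region giving only $O\big((N+D)\exp(-c\sqrt{\log(N+D)})\big)$. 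It would be worth adding a remark to this effect, noting that no unconditional method is currently known that would establish the asymptotic for any $k\ge2$, so the conjecture genuinely goes beyond present technology.
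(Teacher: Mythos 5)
The statement you were asked to prove is Conjecture \ref{conj:ii} of the paper: it is \emph{assumed}, not proved. The author motivates it only by a modified Cram\'er model and the analogy with the classical Hardy--Littlewood conjecture, and then uses it as a hypothesis throughout; so there is no proof in the paper against which to measure your argument. Your submission is consistent with this. The genuinely provable ingredients you supply are sound: the equivalence of the two displayed forms by partial summation after discarding the prime-power contribution; the removal of the condition $(n,q)=1$ using $q\le D$ together with the identity $\mathfrak{S}(\mathcal{D};q)=(\varphi(q)/q)^{-1}\mathfrak{S}(\mathcal{D})$ (which uses $v_p(\mathcal{D})=1$ for $p\mid q$ and is recorded in the paper right after the conjecture); and the major-arc computation identifying the singular series as the correct constant. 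You then correctly conclude that the minor arcs embody the twin-prime barrier, so for every $k\ge 2$ the asymptotic is beyond current technology and the statement must remain a conjecture --- which is exactly the paper's stance.

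Two small caveats on your sketch. First, the major-arc analysis does not yield the main term ``rigorously'' with a power-saving remainder unconditionally: uniformity over the major-arc moduli already requires GRH-quality information (Siegel--Walfisz only saves $\exp(-c\sqrt{\log N})$), as your own remark about the case $k=1$ indicates. Second, the distinctive feature here is the uniformity in $q$ and in $D$ (the shifts satisfy $d_i\equiv 0\Mod{q}$ with $D$ allowed to be much larger than $N$), which is why the paper formulates this as a separate uniform variant rather than deducing it from the standard Hardy--Littlewood conjecture; your reduction makes this visible but should state explicitly that the required error term $O_{k,\delta}((N+D)^{1-\delta})$ uniform in such tuples is itself part of what is being conjectured. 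With those caveats, your assessment matches the paper's: no proof exists, and none is claimed.
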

Note that $
\mathfrak{S}(\mathcal{D};q) = \left( \varphi(q)/q \right)^{-1} \mathfrak{S}(\mathcal{D}),$ where 
\begin{align*}
\mathfrak{S}(\mathcal{D}):=\prod_{p} \left( 1-\frac{1}{p} \right)^{-k}
\left( 1-\frac{v_p(\mathcal{D})}{p} \right)
\end{align*}
is the classical singular series.


Assuming the conjecture, we compute moments of weighted prime counts in progressions to a modulus $q,$ provided that $\varphi(q) \in [N^{1-\epsilon}, \epsilon\frac{N}{\log N}],$ which can be viewed as the progression counterpart of \cite[Theorem 3]{MR2104891}.

\begin{theorem} \label{thm:variesa}
Assume Conjecture \ref{conj:ii}. Let $K \geq 0$ and $N, q \geq 2$ be integers. Suppose $N^{1-2\delta/(K+3)} \leq \varphi(q) \leq \frac{N}{\log N}.$ Then as $N \to \infty,$ we have
\begin{gather*} 
M_K(N;q):=\frac{1}{\varphi(q)}\sideset{}{^*}\sum_{a \Mod{q}} 
\left( \frac{1}{\sqrt{N\log q/\varphi(q)}}
\sum_{\substack{1 \leq n \leq N\\n \equiv a \Mod{q}}}
\left(\Lambda(n)-\frac{q}{\varphi(q)}\right)
\right)^K\\
=\mu_K +O_{K,\delta}
\left( \left( \frac{N}{\varphi(q)\log N} \right)^{-\frac{1}{8K}}+ \left(  \frac{N}{\varphi(q)} \right)^{\frac{K}{2}+1}(\log N)^{K/2} N^{-\delta}+\frac{(\log \log N)^3}{\log N} \right),
\end{gather*}
where 
\begin{align*}
\mu_{K}:=
\begin{cases}
\frac{K!}{2^{K/2}(K/2)!} & \mbox{{\normalfont if $K$ is even,} }\\
\hfil 0 & \mbox{{\normalfont otherwise }} 
\end{cases}
\end{align*}
is the $K$-th standard Gaussian moment.
\end{theorem} 

Applying the method of moments,
the Gaussian law follows readily from Theorem \ref{thm:variesa}.

\begin{corollary} \label{cor:variesa}
Assume Conjecture \ref{conj:ii}. Suppose $N^{1-\epsilon}\leq \varphi(q) \leq \epsilon \frac{N}{\log N}$ for any $\epsilon>0.$ If a reduced residue $a \Mod{q}$ is chosen uniformly at random, then as $N \to \infty$, we have convergence in distribution to a standard Gaussian
\begin{align*}
\frac{1}{ \sqrt{N \log q/\varphi(q)}}
\left( \psi(N;q,a) - \frac{N}{\varphi(q)} \right)
\xrightarrow[]{d} \mathcal{N}(0,1),
\end{align*} 
i.e., for any fixed $\alpha< \beta,$ we have
\begin{gather*}
\frac{1}{\varphi(q)}\left| \left\{ a \Mod{q}\,: \, 
\frac{1}{ \sqrt{N \log q/\varphi(q)}}
\left( \psi(N;q,a) - \frac{N}{\varphi(q)} \right)
 \in (\alpha,\beta] 
\right\} \right| \\
=\frac{1}{\sqrt{2\pi}}\int_{\alpha}^{\beta}e^{-\frac{x^2}{2}}dx+o_{N \to \infty}(1).
\end{gather*}
\end{corollary}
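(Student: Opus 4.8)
The plan is to deduce Corollary \ref{cor:variesa} from Theorem \ref{thm:variesa} by the method of moments. First I would observe that for any fixed $K$, choosing $\epsilon$ small enough so that $1-\epsilon \ge 1 - 2\delta/(K+3)$, Theorem \ref{thm:variesa} applies and both error terms tend to $0$ as $N \to \infty$: the first because $N/(\varphi(q)\log N) \to \infty$ in this range, and the second because $(N/\varphi(q))^{K/2+1}(\log N)^K N^{-\delta}$ is bounded by $N^{\epsilon(K/2+1)+o(1)}N^{-\delta}$, which is negligible once $\epsilon$ is small relative to $\delta/(K+2)$. So it remains to show that the main term
\begin{align*}
\mu_K \cdot \frac{1}{\varphi(q)}\sideset{}{^*}\sum_{a \Mod q}\left(\frac{1}{N/q}\sum_{\substack{1 \le n \le N\\ n\equiv a \Mod q}}\left(\log\frac{qn}{N}+B_q\right)\right)^{K/2}
\end{align*}
converges to $\mu_K \sigma_q^K$ (interpreting the inner power as $0$ when $K$ is odd and $K/2$ is not an integer, consistent with $\mu_K=0$ for odd $K$).

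The key computation, which I expect to be stated as a separate estimate in Section \ref{sec:cor} and which I would simply cite, is that for $\varphi(q)$ in the stated range,
\begin{align*}
\frac{1}{N/q}\sum_{\substack{1 \le n \le N\\ n\equiv a \Mod q}}\left(\log\frac{qn}{N}+B_q\right) = \sigma_q^2 + o(1)
\end{align*}
uniformly in $a$. Indeed the inner sum has roughly $N/q$ terms, and replacing the sum by an integral, $\frac{1}{N/q}\sum_{m \le N/q}\log\frac{qm}{N} \approx \int_0^1 \log t\, dt = -1$, so the bracketed average is $\approx B_q + \log q - 1 = \log\frac{q}{2\pi} - \gamma - \sum_{p\mid q}\frac{\log p}{p-1} = \sigma_q^2$; the error in the integral approximation is $O(q/N \cdot \log(N/q)) = o(1)$ since $q \le N/\log N$. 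Raising to the power $K/2$ and averaging over $a$ then gives $\sigma_q^K + o(1)$ (using that $\sigma_q^2$ stays bounded away from $0$ and $\infty$ under $\omega(q) \le (\log\log q)^2$, and that the $o(1)$ is uniform in $a$). Hence $M_K(N;q) \to \mu_K \sigma_q^K$, i.e. the $K$-th moment of $\frac{1}{\sigma_q\sqrt{N/\varphi(q)}}(\psi(N;q,a)-N/\varphi(q))$ converges to the $K$-th Gaussian moment $\mu_K$.

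Finally I would note that $\psi(N;q,a) = \sum_{n \equiv a} \Lambda(n)$ differs from $\sum_{n \equiv a}(\Lambda(n) - q/\varphi(q))$ by exactly $(N/q)\cdot(q/\varphi(q)) \cdot (1 + O(q/N)) = N/\varphi(q) + O(1)$ after accounting for the count of $n \le N$ with $n \equiv a \Mod q$; more precisely $\sum_{n \equiv a}(\Lambda(n)-q/\varphi(q)) = \psi(N;q,a) - N/\varphi(q) + O(q/\varphi(q))$, and the $O(q/\varphi(q))$ error, once divided by $\sqrt{N/\varphi(q)}$, is $O(\sqrt{q/\varphi(q)}\cdot\sqrt{q/N}) = o(1)$. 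So the normalized quantity in Theorem \ref{thm:variesa} and the one in Corollary \ref{cor:variesa} have the same limiting distribution. Since the standard Gaussian is determined by its moments (Carleman's condition), convergence of all moments $M_K(N;q) \to \mu_K$ implies convergence in distribution to $\mathcal{N}(0,1)$, which is the assertion; the probabilistic reformulation with $(\alpha,\beta]$ is just the definition of convergence in distribution together with continuity of the Gaussian CDF. The main obstacle is purely bookkeeping: verifying that the two error terms in Theorem \ref{thm:variesa} genuinely vanish for \emph{every} fixed $K$ once $\epsilon$ is taken small, and checking the uniformity in $a$ of the inner-sum asymptotic so that the averaging step is legitimate.
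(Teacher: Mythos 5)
Your proposal is correct and follows essentially the same route as the paper: the method of moments applied to Theorem \ref{thm:variesa}, combined with the Section \ref{sec:cor} estimate (Lemma \ref{lem:B_q}) that the inner average equals $\log q + B_q - 1 = \sigma_q^2$ up to an error of size $O\bigl(\tfrac{q}{N}\log(N\log q)\bigr)$, which is exactly your sum-versus-integral computation. One small slip: $\sigma_q^2 \sim \log q$ is \emph{not} bounded away from $\infty$, but this does not matter since all you need is that the error in the inner-average estimate is $o(\sigma_q^2)$, which holds in the stated range.
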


Our corollary is complementary to a recent conjecture of
de la Bret\`{e}che and Fiorilli 
\cite
[Conjecture 1.9]
{MR4611407}
that in the range of $q \in [(\log \log N)^{1+\epsilon}, N^{1-\epsilon}],$ if $V \in \mathbb{R}$ is fixed, then
\begin{gather*}
\frac{1}{\varphi(q)}\left| \left\{ a \Mod{q}\,: \, 
\frac{1}{ \sqrt{N \log q/\varphi(q)}}
\left( \psi(N;q,a) - \frac{\psi(N,\chi_{0,q})}{\varphi(q)} \right)
>V 
\right\} \right|  \\
=\frac{1}{\sqrt{2\pi}}\int_{V}^{\infty}e^{-\frac{x^2}{2}}dx+o_{N \to \infty}(1),
\end{gather*}
where $\psi(N,\chi_{0,q}):=\sum_{1 \leq n \leq N} \Lambda(n)\chi_{0,q}(n)$ with $\chi_{0,q} \Mod{q}$ being the principal character.

Similarly, we also compute moments of the number of primes in arithmetic progressions to a large modulus $q,$ provided that $\varphi(q) \asymp \frac{N}{\log N}.$

\begin{theorem} \label{thm:poisson}
Assume Conjecture \ref{conj:ii}. Let $k \geq 1$ be an integer. Suppose $\varphi(q) \asymp \frac{N}{\log N}.$ Then
\begin{align*}
\frac{1}{\varphi(q)}\sideset{}{^*}\sum_{a \Mod{q}}
\pi(N;q,a)^k=\sum_{j=1}^k \stirling{k}{j}
\left( \frac{N}{\varphi(q)\log N} \right)^j
+O\left( \frac{\log\log N}{\log N} \right)
\end{align*}
as $N \to \infty,$ where $\stirling{k}{j}$ is the Stirling number of the second kind, i.e., the number of ways to partition a set of $k$ objects into $j$ non-empty subsets.
The implied constant depends on $k$ and the ratio $\varphi(q)/\frac{N}{\log N}$ only.
\end{theorem}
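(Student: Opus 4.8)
The plan is to compute the $k$-th moment of $\pi(N;q,a)$ by expanding the $k$-th power as a sum over $k$-tuples of primes lying in the progression $a \Mod{q}$, and then to invoke Conjecture 1.1 to evaluate the resulting inner sums. First I would write
\[
\frac{1}{\varphi(q)}\sideset{}{^*}\sum_{a \Mod{q}} \pi(N;q,a)^k
= \frac{1}{\varphi(q)}\sideset{}{^*}\sum_{a \Mod{q}}
\sum_{\substack{p_1,\dots,p_k \leq N\\ p_i \equiv a \Mod{q}}} 1,
\]
and sort the $k$-tuples $(p_1,\dots,p_k)$ according to the partition of $\{1,\dots,k\}$ induced by the equivalence relation $i \sim i'$ iff $p_i = p_{i'}$. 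A partition into $j$ blocks contributes a sum over $j$ \emph{distinct} primes $p_1,\dots,p_j \leq N$ all congruent to the same residue $a \Mod{q}$; summing over the $\binom{k}{\cdot}$-type multiplicities gives exactly the Stirling number $\stirling{k}{j}$ as the combinatorial weight. Since all $p_i$ are $\leq N$, $\equiv a \Mod{q}$ and distinct, after fixing the smallest we can write $p_i = p_1 + d_i$ with the $d_i$ distinct positive multiples of $q$ in $[1,N]$; thus the contribution of the $j$-block partitions is
\[
\stirling{k}{j}\;\frac{1}{\varphi(q)}\sideset{}{^*}\sum_{a \Mod{q}}
\sum_{\substack{n \leq N\\ n \equiv a \Mod{q}}}\;
\sum_{\substack{\mathcal{D}\subseteq[1,N]\\ |\mathcal{D}|=j-1,\ q \mid d\ \forall d}}
1_{\mathcal{P}}(n)\prod_{d \in \mathcal{D}} 1_{\mathcal{P}}(n+d),
\]
up to the harmless restriction $(n,q)=1$ (forced anyway for $n=p_1$ prime, $p_1 \nmid q$ save for $O(1)$ exceptions). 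Executing the sum over $a$ just removes the congruence constraint (each reduced $a$ fixes the progression), leaving $\sum_{n\leq N,(n,q)=1} 1_{\mathcal{P}}(n)\prod_{d\in\mathcal{D}}1_{\mathcal{P}}(n+d)$, which is precisely the left side of the second form of Conjecture 1.1 with $D = N$.

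Next I would apply Conjecture 1.1 to each such sum, getting the main term $\mathfrak{S}(\mathcal{D};q)\sum_{n\leq N,(n,q)=1}\big(\prod_{d\in\mathcal{D}\cup\{0\}}\log(n+d)\big)^{-1}$ plus an error $O_{j,\delta}(N^{1-\delta})$; summing the error over the $\ll (N/q)^{j-1}$ admissible sets $\mathcal{D}$ and over $j \leq k$, and using $q \asymp \log N$ (from $\varphi(q)\asymp N/\log N$ together with $\varphi(q) \leq q$, and $q/\varphi(q) \ll \log\log q$), shows the total error is $\ll N^{k-\delta+o(1)}$, negligible against the main term of order $(N/(\varphi(q)\log N))^k \asymp 1$ — wait, here is where care is needed: the main term is in fact of \emph{constant} order, so I must instead track that the error after summing is $\ll (\log N)^{-c}$ relative to the main term. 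Concretely $\sum_{\mathcal{D}} N^{1-\delta} \ll (N/q)^{j-1} N^{1-\delta} \ll N^{j-\delta+o(1)}$, while the main term for fixed $j$ is $\asymp (N/(\varphi(q)\log N))^{j}$; since $N/\varphi(q) \asymp \log N$ this main term is $\asymp 1$ and the relative error is $N^{-\delta+o(1)}$, which is far smaller than the claimed $O(\log\log N/\log N)$. So the substantive work is entirely in the main term.

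For the main term I would replace $\log(n+d)$ by $\log N$ at a cost of a factor $1+O(d/(N\log N)) = 1+O(1/\log N)$ uniformly for $d \leq N$ (and similarly $\log n = \log N + O(\log\log N)$ once we restrict to, say, $n > N/\log N$, the small-$n$ range contributing negligibly), so that
\[
\sum_{\substack{n\leq N\\(n,q)=1}}\Big(\prod_{d\in\mathcal{D}\cup\{0\}}\log(n+d)\Big)^{-1}
= \big(1+O(\tfrac{\log\log N}{\log N})\big)\frac{\varphi(q)}{q}\cdot\frac{N}{(\log N)^{j}}.
\]
It then remains to show $\sum_{\mathcal{D}}\mathfrak{S}(\mathcal{D};q) = \big(1+O(\tfrac{\log\log N}{\log N})\big)(q/\varphi(q))(N/(q\log N))^{j-1}\cdot$(the missing $\log N$ powers)$\cdots$ — more precisely, that the average of the singular series over all $(j{-}1)$-element sets $\mathcal{D}$ of multiples of $q$ in $[1,N]$ equals $q/\varphi(q)$ up to $1+o(1)$: writing $\mathfrak{S}(\mathcal{D};q) = (q/\varphi(q))\mathfrak{S}(\mathcal{D})$ and using the classical fact (Montgomery–Soundararajan, via the Fourier/combinatorial expansion of $\mathfrak{S}$) that $\sum_{\substack{d_1,\dots,d_{j-1} \leq N\\ \text{distinct}}}\mathfrak{S}(\{0,d_1,\dots,d_{j-1}\}) = N^{j-1} + O(N^{j-1}/\log N)$-type asymptotics, restricted here to multiples of $q$ so that the count of tuples is $\sim (N/q)^{j-1}$ and the singular-series average over the residual variables is still $1+o(1)$ because $p \mid q$ already divides all the $d$'s and the $p \nmid q$ local factors average to $1$. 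Combining, the $j$-block contribution is $\stirling{k}{j}(1+O(\log\log N/\log N))(N/(\varphi(q)\log N))^{j}$, and summing over $1 \leq j \leq k$ gives the theorem.

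The main obstacle is the last step — controlling the average of the singular series $\mathfrak{S}(\mathcal{D};q)$ over $(j-1)$-subsets $\mathcal{D}$ of the multiples of $q$ in $[1,N]$ with the required relative precision $1 + O(\log\log N/\log N)$, uniformly in $q$ with $\omega(q) \leq$ a slowly growing function. I expect to handle this exactly as in Montgomery–Soundararajan: expand $\mathfrak{S}(\mathcal{D}) = \sum_{\mathbf{a}} \prod c(\mathbf{a}) \cdots$ into a Dirichlet series / use the identity $\mathfrak{S}(\mathcal{D}) = \sum_{q_1,\dots}\frac{\mu(\cdot)^2}{\varphi(\cdot)}\cdots$, pull out the main diagonal term contributing $1$, and bound the off-diagonal by $O(1/\log N)$ after summing over $\mathcal{D}$; the dependence on $q$ enters only through the finitely many bad primes $p \mid q$, where $v_p(\mathcal{D} \cup \{0\}) = 1$ so the local factor is exactly $(1-1/p)^{-(j)}(1-1/p) = (1-1/p)^{-(j-1)}$, contributing the clean factor $(q/\varphi(q))^{j-1}\cdot$(bookkeeping) that I must reconcile against the $(q/\varphi(q))^1$ coming from $\mathfrak{S}(\mathcal{D};q)/\mathfrak{S}(\mathcal{D})$ and against the $\varphi(q)/q$ from the reciprocal-log sum — the powers of $q/\varphi(q)$ conspire to leave precisely $(N/(\varphi(q)\log N))^j$, and verifying that bookkeeping is the one genuinely delicate point.
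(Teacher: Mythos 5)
Your plan follows the paper's proof essentially step for step: the Stirling-number expansion of the $k$-th power, the change of variables to sets $\mathcal{D}$ of multiples of $q$, the application of Conjecture \ref{conj:ii}, and the reduction to an average of the singular series over such sets, which the paper carries out in Lemma \ref{lem:sum} using exactly the Montgomery--Soundararajan-type machinery you name (and which is indeed the source of the $\tfrac{\log\log N}{\log N}$ error). The only points you gloss over are minor and fixable: the implicit restriction $n \leq N - \max\mathcal{D}$ in your displayed sum, and the boundary tuples with $\max\mathcal{D} > N - q$, for which the inner sum over $n$ is too short for the logarithmic approximation and which the paper isolates as a separate term $\Sigma_2$ and bounds directly.
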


Applying the method of moments, the Poissonian law follows from Theorem \ref{thm:poisson}. 

\begin{corollary}
Assume Conjecture \ref{conj:ii}. Let $k \geq 0$ be a fixed integer. Suppose $\varphi(q)=(1+o(1))\frac{N}{\lambda \log N}$ for some fixed $\lambda>0$ as $N \to \infty.$ 
Then 
\begin{align*}
\lim_{N \to \infty} \frac{1}{\varphi(q)}\left| \left\{ a \Mod{q} \,: \, 
\pi(N;q,a)=k \right\}
 \right| 
=e^{-\lambda}\frac{\lambda^k}{k!}.
\end{align*}
\end{corollary}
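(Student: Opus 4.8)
The plan is to apply the method of moments. Fix $\lambda>0$ and, for each $N$, regard $X_N:=\pi(N;q,a)$ as a random variable in which $a$ is drawn uniformly from the reduced residues modulo $q=q(N)$, so that $\frac{1}{\varphi(q)}\left|\{a \Mod{q}\,:\,\pi(N;q,a)=k\}\right|=\mathbb{P}(X_N=k)$. Each $X_N$ is a bounded nonnegative integer, hence possesses moments of all orders. By Theorem \ref{thm:poisson} together with the hypothesis $\varphi(q)=(1+o(1))\frac{N}{\lambda\log N}$, which forces $\frac{N}{\varphi(q)\log N}=(1+o(1))\lambda$, we obtain for every fixed integer $k\geq 1$
\begin{align*}
\lim_{N\to\infty}\mathbb{E}\left[X_N^k\right]=\sum_{j=1}^{k}\stirling{k}{j}\lambda^j=:m_k.
\end{align*}

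Next I would identify $m_k$ as the $k$-th moment of a Poisson random variable $Z_\lambda$ of parameter $\lambda$: indeed $\mathbb{E}[Z_\lambda^k]=\sum_{j=0}^{k}\stirling{k}{j}\lambda^j$ by the classical identity expanding powers into falling factorials combined with the Poisson factorial-moment identity $\mathbb{E}[(Z_\lambda)_j]=\lambda^j$ (equivalently, Dobi\'nski's formula), and since $\stirling{k}{0}=0$ for $k\geq 1$ the two expressions agree. The Poisson law is moment-determinate --- for instance its moment generating function $t\mapsto e^{\lambda(e^t-1)}$ is finite on all of $\mathbb{R}$, so Carleman's condition holds --- whence by the Fr\'echet--Shohat moment convergence theorem the convergence $\mathbb{E}[X_N^k]\to m_k$ for all $k\geq 1$ (the case $k=0$ being trivial) upgrades to convergence in distribution $X_N\xrightarrow{d}Z_\lambda$.

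Finally, since each $X_N$ is integer-valued and the half-integers $k\pm\tfrac12$ are continuity points of the distribution function of $Z_\lambda$, weak convergence yields
\begin{align*}
\mathbb{P}(X_N=k)&=\mathbb{P}\bigl(k-\tfrac12<X_N\leq k+\tfrac12\bigr)\longrightarrow\mathbb{P}\bigl(k-\tfrac12<Z_\lambda\leq k+\tfrac12\bigr)\\
&=\mathbb{P}(Z_\lambda=k)=e^{-\lambda}\frac{\lambda^k}{k!},
\end{align*}
which is the asserted limit. The only genuine point requiring care is this passage from moment convergence to weak convergence, namely verifying moment-determinacy of the Poisson limit (handled above via Carleman), together with the elementary but necessary observation that one must evaluate the limiting distribution function at half-integers rather than at $k$ itself, as the integers are atoms of $Z_\lambda$; all of the arithmetic content --- the moment asymptotics --- is supplied by Theorem \ref{thm:poisson}.
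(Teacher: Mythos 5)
Your proposal is correct and is exactly the argument the paper intends: the paper derives the corollary from Theorem \ref{thm:poisson} by the method of moments without writing out the details, and your write-up (Poisson moments via Stirling numbers, moment-determinacy by Carleman/Fr\'echet--Shohat, and evaluation of the limiting CDF at the half-integer continuity points) supplies precisely those standard details.
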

How big is the first prime in a given arithmetic progression? 
Let $p(q,a)$ denote the least prime in the arithmetic progression congruent to $a$ modulo $q$. Then Linnik's theorem states that there exists an absolute constant $L>0$ such that $p(q,a) \ll q^L.$ In fact, Xylouris \cite{MR3086819} proved that one can take $L=5.$ However, we believe this is far from the full truth and Heath-Brown \cite{MR491558} conjectured that $p(q,a) \ll \varphi(q)\log^2 q.$  
Nonetheless, assuming the generalized Riemann hypothesis, Tur\'{a}n \cite{MR836} proved that $p(q,a) \ll_{\epsilon} \varphi(q)\log^{2+\epsilon} q$ except possibly for $o(\varphi(q))$ arithmetic progressions. Here, we show that as soon as $f(q) \to \infty$ with $q \to \infty,$ the least prime $p(q,a)$ is $\leq f(q)\varphi(q)\log q$ for almost all $a \Mod{q}$ under the uniform variant of the prime $k$-tuple conjecture. In fact, as a direct consequence of Theorem \ref{thm:poisson}, the least prime in arithmetic progressions to a large modulus has an exponential limiting distribution.
\begin{corollary} \label{cor:exp}
Assume Conjecture \ref{conj:ii}. Let $t > 0$ be a fixed real number. Then 
\begin{align*}
\lim_{q \to \infty} \frac{1}{\varphi(q)} \left| \{ a \Mod{q} \,:\, 
p(q,a) \leq t \varphi(q) \log q
\} \right| = 1-e^{-t}.
\end{align*}
In particular,
as $q \to \infty$, the mean of $p(q,a)$ is
\begin{align*}
\frac{1}{\varphi(q)}\sideset{}{^*}\sum_{a \Mod{q}} p(q,a) =(1+o(1))\varphi(q) \log q.
\end{align*}
\end{corollary}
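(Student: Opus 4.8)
\emph{Plan.} I would deduce both assertions from the Poissonian limit law attached to Theorem~\ref{thm:poisson}. Put $\Lambda:=\varphi(q)\log q$ and $N:=\lfloor t\Lambda\rfloor$. The first step is to check that the hypothesis $\varphi(q)=(1+o(1))N/(\lambda\log N)$ holds with $\lambda=t$ as $q\to\infty$: since $q/\varphi(q)=\prod_{p\mid q}(1-1/p)^{-1}\ll\log\log q$ one has $\log\varphi(q)=(1+o(1))\log q$, hence $\log N=\log t+\log\varphi(q)+\log\log q+o(1)=(1+o(1))\log q$ and $N/(\varphi(q)\log N)=(t\log q/\log N)(1+o(1))=t+o(1)$; in particular $\varphi(q)\asymp N/\log N$. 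Taking multiplicity $k=0$ in the Poissonian law gives $\varphi(q)^{-1}\lvert\{a:\pi(N;q,a)=0\}\rvert\to e^{-t}$, and since $p(q,a)$ is an integer, $p(q,a)\le t\varphi(q)\log q\iff p(q,a)\le N\iff\pi(N;q,a)\ge1$; therefore
\[
\frac{1}{\varphi(q)}\bigl\lvert\{a\Mod q:\ p(q,a)\le t\varphi(q)\log q\}\bigr\rvert=1-\frac{1}{\varphi(q)}\bigl\lvert\{a:\pi(N;q,a)=0\}\bigr\rvert\longrightarrow 1-e^{-t}.
\]

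For the mean, set $g_q(u):=\varphi(q)^{-1}\lvert\{a\Mod q:\ p(q,a)>u\}\rvert=\varphi(q)^{-1}\lvert\{a:\pi(u;q,a)=0\}\rvert$, a non-increasing function of $u$ equal to $1$ for $u<2$ and, by Linnik's theorem (in Xylouris's form, exponent $5$), equal to $0$ for $u\ge cq^{5}$. Since $\sideset{}{^*}\sum_{a\Mod q}p(q,a)=\sum_{m\ge0}\lvert\{a:\ p(q,a)>m\}\rvert$ and $g_q$ is monotone,
\[
\frac{1}{\varphi(q)}\sideset{}{^*}\sum_{a\Mod q}p(q,a)=\int_0^\infty g_q(u)\,du+O(1)=\Lambda\int_0^\infty g_q(\Lambda t)\,dt+O(1).
\]
The first part, applied with $t$ replaced by each fixed $t_0>0$, gives $g_q(\Lambda t_0)\to e^{-t_0}$; as $t\mapsto g_q(\Lambda t)$ is non-increasing with continuous limit, Dini's theorem makes this uniform on each $[0,T]$, so $\int_0^T g_q(\Lambda t)\,dt\to 1-e^{-T}$ and hence $\liminf_{q\to\infty}\Lambda^{-1}\varphi(q)^{-1}\sideset{}{^*}\sum_a p(q,a)\ge 1$. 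The mean formula therefore reduces to the uniform-integrability estimate
\[
\lim_{T\to\infty}\ \limsup_{q\to\infty}\ \frac1\Lambda\int_{T\Lambda}^{\infty}g_q(u)\,du=0,
\]
which combined with $\int_0^T g_q(\Lambda t)\,dt\le 1+o(1)$ gives the matching $\limsup\le 1$.

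For the uniform-integrability estimate I would split at $u_1:=\varphi(q)^{1+\eta}$, with $\eta=\eta(K,\delta)>0$ chosen so that $N=u$ lies in the admissible range $N^{1-2\delta/(K+3)}\le\varphi(q)\le N/\log N$ of Theorem~\ref{thm:variesa} for a fixed even $K\ge 3$. On $[T\Lambda,u_1]$: if $\pi(u;q,a)=0$ then $\vartheta(u;q,a):=\sum_{p\le u,\,p\equiv a\Mod q}\log p$ vanishes, and outside a set of $O(\sqrt u)$ residue classes (those containing a prime power $\le u$) this forces $\psi(u;q,a)=0$, i.e.\ $\lvert\psi(u;q,a)-u/\varphi(q)\rvert=u/\varphi(q)$. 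Inserting the $K$-th moment bound of Theorem~\ref{thm:variesa} — whose main term is $\asymp(\log q)^{K/2}$, giving $\varphi(q)^{-1}\sideset{}{^*}\sum_a(\psi(u;q,a)-u/\varphi(q))^K\ll_K((\log q)u/\varphi(q))^{K/2}$ — into Markov's inequality yields $g_q(u)\ll_K(cK\Lambda/u)^{K/2}+\sqrt u/\varphi(q)$, and hence $\Lambda^{-1}\int_{T\Lambda}^{u_1}g_q(u)\,du\ll_K T^{1-K/2}+o_q(1)$, which tends to $0$ on letting first $q\to\infty$ and then $T\to\infty$; in particular $g_q(u_1)\ll_K\varphi(q)^{-c_K}$ for a fixed $c_K>0$.

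The remaining integral $\int_{u_1}^{cq^{5}}g_q(u)\,du$ is the step I expect to be the genuine obstacle, and the one I would devote the most effort to. Conjecture~\ref{conj:ii} does not apply there (with $\mathcal D\subseteq[1,u]$ there are $\asymp(u/q)^{K}$ relevant tuples, each carrying an error $(u+D)^{1-\delta}$, which for $u$ of polynomial size in $q$ overwhelms the conjectural main term for every $K$), the elementary second-moment/sieve bound gives only the exponent-one decay $g_q(u)\ll\Lambda/u$, whose integral over a polynomially long range is $\asymp\Lambda\log q$ (a factor $\log q$ too large), and multiplying $g_q(u_1)\ll\varphi(q)^{-c_K}$ by the length $\asymp q^{5}$ of the range is far too large as well. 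A first reduction is to upgrade Linnik's theorem using Conjecture~\ref{conj:ii} itself — in the case $k=2$ a real zero $\beta_1$ of a Dirichlet $L$-function to a divisor of $q$ would inject a spurious term of size $\asymp u^{2\beta_1-1}$, so the Conjecture forces the zero-free region $\Re s<1-\delta/2$, and feeding this into the log-free density method yields $p(q,a)\ll_\delta\varphi(q)^{O_\delta(1)}$ for all $a$ — but even this is not by itself enough: the real content is to obtain genuine sub-polynomial decay of $g_q(u)$ throughout $[u_1,\varphi(q)^{O_\delta(1)}]$, i.e.\ a bound of the shape $g_q(u)\ll(\Lambda/u)^{1+\varepsilon}$ (or exponential decay) uniform in that range, which requires an honest exponential large-deviation estimate for $\pi(u;q,a)$ going beyond the asymptotic moment information furnished by Theorems~\ref{thm:variesa}--\ref{thm:poisson} (for instance through the factorial moments / generating function of $\pi(u;q,a)$, or a dyadic iteration of the second moment with genuine cancellation). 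Once the deep-tail contribution is shown to be $o(\Lambda)$, the uniform-integrability estimate, and with it the mean formula, follows at once.
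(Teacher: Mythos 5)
Your proof of the first limit is correct and is exactly the paper's (implicit) route: the paper derives it from the Poissonian corollary to Theorem~\ref{thm:poisson} with $k=0$, via the equivalence $p(q,a)>N \iff \pi(N;q,a)=0$ and the verification that $N=\lfloor t\varphi(q)\log q\rfloor$ satisfies $\varphi(q)=(1+o(1))N/(t\log N)$. Your check of that hypothesis (using $q/\varphi(q)\ll\log\log q$ so that $\log N=(1+o(1))\log q$) is the right computation.

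For the mean formula your proposal is, by your own account, incomplete: you reduce it to the uniform-integrability statement $\lim_{T}\limsup_{q}\Lambda^{-1}\int_{T\Lambda}^{\infty}g_q(u)\,du=0$, handle the range $u\le\varphi(q)^{1+\eta}$ via Markov's inequality and Theorem~\ref{thm:variesa}, and then correctly observe that none of the stated results controls $g_q$ on $[\varphi(q)^{1+\eta},cq^{5}]$ (the conjecture's error term $(N+D)^{1-\delta}$ swamps the main term there, and the elementary bounds lose a factor of $\log q$ or worse). That diagnosis is accurate, and you should be aware that the paper itself offers no argument here either: the mean is asserted with ``in particular'' immediately after the distributional limit, which, as you note, does not follow from convergence in distribution alone. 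So the gap you identify is real, but it is a gap you share with the source rather than one the paper's proof resolves and you missed; your treatment of the first assertion, and of the moderate range of the tail, is more careful than what the paper records. If you wanted to close the tail within the paper's framework, the natural move is the one you sketch (a second-moment or sieve bound giving $g_q(u)\ll\Lambda/u$ plus genuine extra decay), but this does require input beyond Theorems~\ref{thm:variesa} and~\ref{thm:poisson} as stated.
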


\begin{remark}
Erd\H{o}s \cite[Theorem 2]{MR29941} proved unconditionally that for each fixed $t>0,$ we have
\begin{align*}
\liminf_{q \to \infty} \frac{1}{\varphi(q)} \left| \{ a \Mod{q} \,:\, 
p(q,a) \leq t \varphi(q) \log q
\} \right| >0.
\end{align*}
\end{remark}

These results can be viewed as the progression counterparts of the work by Gallagher \cite{MR409385}. See also \cite{MR1024571}, where $a$ is fixed but $q$ varies.

In Section \ref{sec:stat}, we examine some statistics on the least primes in arithmetic progressions. Finally, Section \ref{sec:discre} explains the unexpected discrepancies observed for smooth moduli.  
\\~\\
\noindent\textit{Notation.}
Throughout the paper, we use the standard big $O,$ little $o$ notations, and the Vinogradov notation $\ll,$ where the implied constants depend only on the subscripted parameters, unless otherwise specified. Also, we write $\sum_{a \Mod{q}}^{*}$ to denote the sum over reduced residues modulo $q.$ 

\section{Reduced residues in a progression to a large modulus}

Before embarking on the proof of Theorem \ref{thm:variesa}, we first compute moments of the number of reduced residues in an arithmetic progression to a large modulus. To simplify notation, all implied constants henceforth depend on $K, k, \epsilon, \delta.$

\begin{lemma}  \label{lem:vkii}
Let $k\geq 0, Q \geq 1, q \geq 2, a \geq 1$ be integers for which $(q,Q)=1$ and $(a,q)=1.$ Then for any $N \geq 2q,$ we have
\begin{align*} 
V_{k}(Q;q,a):=&\frac{1}{Q}
\sum_{ m \Mod{Q}}
\left( \sum_{\substack{1 \leq n \leq N\\n \equiv a \Mod{q}}}
\left( \left(\frac{\varphi(Q)}{Q}\right)^{-1}  1_{(n+m,Q)=1}-1\right)
\right)^{k}\\
=&
\sum_{1<r_i\mid Q}\prod_{i=1}^k\frac{\mu(r_{i})}{\varphi(r_{i})}
\sideset{}{^*}\sum_{\substack{b_{i} \Mod{r_{i}}\\ \sum_{i=1}^k b_{i}/r_{i} \equiv 0 \Mod{1}}}
\prod_{i=1}^k E_{q,a}\left( \frac{b_i}{r_i} \right),
\end{align*}
where
\begin{align*}
E_{q,a}\left( \alpha \right):=\sum_{\substack{1 \leq n \leq N\\n \equiv a \Mod{q}}} e \left( n\alpha \right).
\end{align*}

\begin{proof}
Let $c_Q(A)$ denote the Ramanujan sum $
\sideset{}{^*}\sum_{B \Mod{Q}}e\left( \frac{AB}{Q} \right).$ Then
\begin{align*}
1_{(n,Q)=1}=\frac{1}{Q}\sum_{A \Mod{Q}}c_Q(A)e\left( \frac{An}{Q} \right),
\end{align*}
so that
\begin{align} \label{eq:discretefourier}
\sum_{\substack{1 \leq n \leq N\\n \equiv a \Mod{q}}}
\left( \left(\frac{\varphi(Q)}{Q}\right)^{-1}  1_{(n+m,Q)=1}-1\right)=
\frac{1}{\varphi(Q)}\sum_{A \not\equiv 0 \Mod{Q}}c_Q(A)e\left( \frac{Am}{Q} \right)E_{q,a}\left( \frac{A}{Q} \right).
\end{align}
Recall the well-known identity (see \cite[p.44]{MR2061214})
\begin{align*}
c_Q(A)=\frac{\mu(Q/(A,Q))}{\varphi(Q/(A,Q))}\varphi(Q).
\end{align*}
Let $r=Q/(A,Q).$ Then (\ref{eq:discretefourier}) becomes
\begin{align*}
\sum_{1<r|Q}\frac{\mu(r)}{\varphi(r)}
\sideset{}{^*}\sum_{b \Mod{r}}e\left( \frac{bn}{r} \right) E_{q,a}\left( \frac{b}{r} \right).
\end{align*}
Therefore, we have
\begin{align*}
V_k(Q;q,a)&=\frac{1}{Q}\sum_{m \Mod{Q}}
\left(\sum_{1<r|Q}\frac{\mu(r)}{\varphi(r)}
\sideset{}{^*}\sum_{b \Mod{r}}e\left( \frac{bm}{r} \right) E_{q,a}\left( \frac{b}{r} \right)\right)^k\\
&= \sum_{1<r_i | Q}\prod_{i=1}^k 
\frac{\mu(r_i)}{\varphi(r_i)}\sideset{}{^*}\sum_{b_i \Mod{r_i}}
\prod_{i=1}^k E_{q,a}\left( \frac{b_i}{r_i}  \right)
\frac{1}{Q}\sum_{m \Mod{Q}} e\left( \left(\sum_{i=1}^k \frac{b_i}{r_i} \right)m\right)\\
&=
\sum_{1<r_i\mid Q}\prod_{i=1}^k\frac{\mu(r_{i})}{\varphi(r_{i})}
\sideset{}{^*}\sum_{\substack{b_{i} \Mod{r_{i}}\\ \sum_{i=1}^k b_{i}/r_{i} \equiv 0 \Mod{1}}}
\prod_{i=1}^k E_{q,a}\left( \frac{b_i}{r_i} \right)
\end{align*}
as desired.
\end{proof}

\end{lemma}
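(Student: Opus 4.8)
The plan is to detect the coprimality condition $(n+m,Q)=1$ by a discrete Fourier (Ramanujan sum) expansion, carry out the average over the shift $m$ by orthogonality, and then read off the claimed formula.

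First I would recall the expansion $1_{(n,Q)=1}=\frac{1}{Q}\sum_{A\Mod{Q}}c_Q(A)e(An/Q)$, where $c_Q(A)=\sideset{}{^*}\sum_{B\Mod{Q}}e(AB/Q)$ is the Ramanujan sum. Applying this with $n$ replaced by $n+m$, the term $A\equiv 0\Mod{Q}$ contributes $c_Q(0)/Q=\varphi(Q)/Q$, so after multiplying by $(\varphi(Q)/Q)^{-1}$ it equals $1$ and cancels the $-1$ in the definition of $V_k(Q;q,a)$. Summing the remaining terms over $n\equiv a\Mod{q}$, $1\le n\le N$, produces the exponential sum $E_{q,a}(A/Q)$, so that
\[
\sum_{\substack{1\le n\le N\\ n\equiv a\Mod{q}}}\!\Bigl(\bigl(\tfrac{\varphi(Q)}{Q}\bigr)^{-1}1_{(n+m,Q)=1}-1\Bigr)=\frac{1}{\varphi(Q)}\sum_{A\not\equiv 0\Mod{Q}}c_Q(A)\,e\!\left(\tfrac{Am}{Q}\right)E_{q,a}\!\left(\tfrac{A}{Q}\right).
\]

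Next I would insert the classical evaluation $c_Q(A)=\mu(r)\varphi(Q)/\varphi(r)$ with $r:=Q/(A,Q)$ and change variables from $A$ to the pair $(r,b)$, where $r\mid Q$ and $b\Mod{r}$ is the reduced residue with $A/Q\equiv b/r\Mod 1$. Since $c_Q(A)$ depends on $A$ only through $(A,Q)$, each admissible pair $(r,b)$ comes from exactly one $A\Mod{Q}$, and the restriction $A\not\equiv 0\Mod{Q}$ becomes $r>1$. This rewrites the inner expression as $\sum_{1<r\mid Q}\frac{\mu(r)}{\varphi(r)}\sideset{}{^*}\sum_{b\Mod{r}}e(bm/r)E_{q,a}(b/r)$.

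Finally I would raise this to the $k$-th power, multiply out the product over the $k$ factors with independent summation variables $r_i,b_i$, interchange the order of summation to bring the average over $m\Mod{Q}$ innermost, and evaluate $\frac{1}{Q}\sum_{m\Mod{Q}}e\bigl(m\sum_{i=1}^k b_i/r_i\bigr)$ by orthogonality. Since every $r_i\mid Q$, the rational number $\sum_{i=1}^k b_i/r_i$ has denominator dividing $Q$, so this average equals $1$ exactly when $\sum_{i=1}^k b_i/r_i\equiv 0\Mod 1$ and $0$ otherwise; imposing this constraint and collecting the surviving terms gives the stated identity. The argument is essentially bookkeeping in Fourier analysis on $\mathbb{Z}/Q\mathbb{Z}$, and I do not anticipate a genuine obstacle; the one place to be careful is the change of variables $A\mapsto(r,b)$, where one must verify that the multiplicity is $1$ and that $r>1$ corresponds precisely to $A\not\equiv 0\Mod{Q}$.
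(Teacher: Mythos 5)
Your proposal is correct and follows essentially the same route as the paper: Ramanujan-sum expansion of the coprimality indicator, cancellation of the $A\equiv 0$ term against the $-1$, the change of variables $A\mapsto(r,b)$ via $c_Q(A)=\mu(r)\varphi(Q)/\varphi(r)$, and orthogonality in $m\Mod{Q}$ to impose $\sum_i b_i/r_i\equiv 0\Mod 1$. No gaps.
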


Using the lemma, we compute the moments of reduced residues in an arithmetic progression to a large modulus. This is essentially \cite[Theorem 1.1]{MR4843239} with $c_1 \equiv \cdots \equiv c_k \equiv a \Mod{r}$ (in Kuperberg's notation), except our error term depends on the modulus $q$ explicitly, which plays a key role in the proof of Theorem \ref{thm:variesa}.

\begin{proposition} \label{prop:mainii}
Let $k\geq 0, Q \geq 1, q \geq 2, a \geq 1$ be integers for which $(q,Q)=1$ and $(a,q)=1.$ Then for any $N \geq 2q$, we have
\begin{align*}
V_{k}(Q;q,a)
=\mu_k 
V_2 \left( Q;q,a \right)^{k/2}
+O \left( \left( \frac{N}{q} \right)^{\frac{k}{2}-\frac{1}{7k}} \left( \frac{\varphi(Q)}{Q} \right)^{-2^k-\frac{k}{2}} \right).
\end{align*}

\begin{proof}
We adapt the proof of \cite[Theorem 1]{MR2104891}.
Without loss of generality, let us assume $1 \leq a < q.$ Let $1<r \mid Q$ and $b \Mod{r}$ be a reduced residue. Then by definition, we have
\begin{align*}
E_{q,a}\left( \frac{b}{r}\right)&=
\sum_{\substack{1 \leq n \leq N\\n \equiv a \Mod{q}}} e \left( \frac{bn}{r} \right)\\
&=\sum_{0 \leq h \leq \frac{N-a}{q}}e\left( \frac{b}{r}(qh+a) \right)\\
&=e\left( \frac{ba}{r} \right)
\sum_{0 \leq h \leq \frac{N-a}{q}} e\left( \frac{bqh}{r} \right).
\end{align*}
Since by assumption $(q,r)=(b,r)=1,$ we also have $(bq,r)=1.$ If $c \equiv bq \Mod{r},$ then
\begin{align} \label{eq:F_q}
\left| E_{q,a}\left( \frac{b}{r}\right) \right| \leq 
F_q \left( \frac{c}{r} \right):=\min \left\{ \frac{N}{q}+1,\left\| \frac{c}{r} \right\|^{-1}\right\}.
\end{align}
Let $\mathcal{D}_k$ denote the set of $\boldsymbol{r}=(r_i)_{1 \leq i \leq k}$ for which the $r_i$'s are equal in pairs with no further equalities among them.
Then, one can argue as in the proof of \cite[Lemma 8]{MR835765} that
\begin{gather*}
\sum_{\substack{1<r_i\mid Q\\ \boldsymbol{r} \notin \mathcal{D}_k}}\prod_{i=1}^k\frac{\mu^2(r_{i})}{\varphi(r_{i})}
\sideset{}{^*}\sum_{\substack{b_{i} \Mod{r_{i}}\\ \sum_{i=1}^k b_{i}/r_{i} \equiv 0 \Mod{1}}}
\prod_{i=1}^k \left| E_{q,a}\left( \frac{b_i}{r_i}  \right) \right| \\
 \leq \sum_{\substack{1<r_i\mid Q\\ \boldsymbol{r} \notin \mathcal{D}_k}}\prod_{i=1}^k\frac{\mu^2(r_{i})}{\varphi(r_{i})}
\sideset{}{^*}\sum_{\substack{c_{i} \Mod{r_{i}}\\ \sum_{i=1}^k c_{i}/r_{i} \equiv 0 \Mod{1}}}
\prod_{i=1}^k F_{q}\left( \frac{c_i}{r_i} \right) \\
\ll
\left( \frac{N}{q} \right)^{\frac{k}{2}-\frac{1}{7k}} \left( \frac{\varphi(Q)}{Q} \right)^{-2^k-\frac{k}{2}}.
\end{gather*}
Appealing to Lemma \ref{lem:vkii}, it remains to estimate
\begin{align} \label{eq:D_kii}
\sum_{\substack{1<r_i\mid Q\\ \boldsymbol{r} \in \mathcal{D}_k}}\prod_{i=1}^k\frac{\mu(r_{i})}{\varphi(r_{i})}
\sideset{}{^*}\sum_{\substack{b_{i} \Mod{r_{i}}\\ \sum_{i=1}^k b_{i}/r_{i} \equiv 0 \Mod{1}}}
\prod_{i=1}^k E_{q,a}\left( \frac{b_i}{r_i} \right).
\end{align}
We can assume $k$ is even, for otherwise $\mathcal{D}_{\boldsymbol{k}}$ is empty. Let 
\begin{align*}
\mathcal{B}_{k}:=\{\sigma=\{\{i,j\}\}_{i,j} \,: 1 \leq i < j \leq k, |\sigma|=k/2\}
\end{align*}
denote the set of perfect matchings of $\{1,\ldots,k \}.$ Then, by definition (\ref{eq:D_kii}) is
\begin{align*}
\sum_{\sigma \in \mathcal{B}_k}
\sum_{\substack{1<r_{\tau_{\sigma}} \mid Q\\ \tau_{\sigma}=\{i,j\}\in \sigma\\ r_{\tau_{\sigma}} \text{ distinct}}}
\prod_{\tau_{\sigma} \in \sigma}
\frac{\mu^2(r_{\tau_{\sigma}})}{\varphi^2(r_{\tau_{\sigma}})}
\sideset{}{^*}\sum_{\substack{c_{\tau_{\sigma}} \Mod{r_{\tau_{\sigma}}}\\ \sum_{\tau_{\sigma}}c_{\tau_{\sigma}}/r_{\tau_{\sigma}}  \equiv 0 \Mod{1} }}
\prod_{\tau_{\sigma} \in \sigma}
J(c_{\tau_{\sigma}},r_{\tau_{\sigma}}),
\end{align*}
where
\begin{align*}
J(c,r):=
\sideset{}{^*}\sum_{\substack{ b \Mod{r}\\(c-b,q)=1}}E_{q,a}\left( \frac{b}{r} \right)
E_{q,a}\left( \frac{c-b}{q} \right).
\end{align*}
As in the proof of \cite[Theorem 1]{MR2104891}, the distinctness condition among the $r_{\tau_{\sigma}}$'s can be dropped, since the error induced is negligible (see \cite[p. 597--598]{MR2104891}). Besides, for each $\sigma \in \mathcal{B}_{},$ the main contribution comes from $c_{\tau_{\sigma}} \equiv 0 \Mod{q_{\tau_{\sigma}}}$ for all $\tau_{\sigma},$ and the rest is also negligible (see \cite[p. 598--599]{MR2104891}). In this case, note that
\begin{align*}
J(0,r)=\sideset{}{^*}\sum_{b \Mod{r}} \left|E_{q,a}\left( \frac{b}{r} \right) \right|^2.
\end{align*}
Therefore, we are left with
\begin{align*}
\sum_{\sigma \in \mathcal{B}_{k}}
\sum_{\substack{1<r_{\tau_{\sigma}} \mid Q\\ \tau_{\sigma}=\{i,j\}\in \sigma}}
\prod_{\tau_{\sigma} \in \sigma}
\frac{\mu^2(q_{\tau_{\sigma}})}{\varphi^2(q_{\tau_{\sigma}})}
\prod_{\tau_{\sigma} \in \sigma}
J_{i_1,i_2}(0,q_{\tau_{\sigma}}),
\end{align*}
which is by Lemma \ref{lem:vkii} 
\begin{align*}
 \mu_k V_2 \left( Q;q,a \right)^{k/2},
\end{align*}
and hence the proposition follows.
\end{proof}

\end{proposition}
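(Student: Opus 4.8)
The plan is to start from the exact identity for $V_k(Q;q,a)$ furnished by Lemma~\ref{lem:vkii} and then run, with explicit bookkeeping of the factors of $q$ and $\varphi(Q)/Q$, the moment computation of Montgomery--Soundararajan \cite[Theorem~1]{MR2104891}. The one analytic input is a pointwise bound on the exponential sum: by evaluating the geometric progression underlying $E_{q,a}(b/r)$ one gets $|E_{q,a}(b/r)| \le F_q(c/r) = \min\{N/q+1,\,\|c/r\|^{-1}\}$ with $c \equiv bq \Mod{r}$, and since $(q,r)=1$ the map $b\mapsto c$ permutes the reduced residues modulo $r$; hence any weighted sum of $\prod_i |E_{q,a}(b_i/r_i)|$ subject to a linear constraint $\sum_i b_i/r_i \in \mathbb{Z}$ is dominated by the corresponding sum of $\prod_i F_q(c_i/r_i)$. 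One then splits the $k$-fold sum over tuples $\boldsymbol r = (r_1,\dots,r_k)$ of divisors of $Q$ according to whether $\boldsymbol r \in \mathcal{D}_k$ (the $r_i$ coincide in pairs, with no further coincidence) or not.

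First I would dispose of the off-diagonal tuples $\boldsymbol r \notin \mathcal{D}_k$. Replacing each $E_{q,a}$ by $F_q$ and arguing as in the proof of Montgomery--Vaughan \cite[Lemma~8]{MR835765} (as used in \cite{MR2104891}), one bounds the resulting constrained sum of products of $F_q$-values over non-diagonal tuples. Since every $F_q(c/r) \le N/q+1$ and the constraint $\sum_i c_i/r_i \in \mathbb{Z}$ frees essentially one whole variable, this contributes $O\big((N/q)^{k/2 - 1/(7k)}\,(\varphi(Q)/Q)^{-2^k - k/2}\big)$: the saving $(N/q)^{-1/(7k)}$ is exactly the gain that lemma provides, while the power of $\varphi(Q)/Q$ records the weights $\prod_i \mu^2(r_i)/\varphi(r_i)$ (whose divisor sum $\sum_{r\mid Q}\mu^2(r)/\varphi(r)$ is $(\varphi(Q)/Q)^{-1}$) together with the density of reduced residues hidden in $F_q$.

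It then remains to evaluate the diagonal $\boldsymbol r \in \mathcal{D}_k$, which is empty unless $k$ is even. I would organise these tuples by the perfect matching $\sigma \in \mathcal{B}_k$ of $\{1,\dots,k\}$ recording which indices carry a common modulus, using $|\mathcal{B}_k| = (k-1)!! = \mu_k$. For one matched pair $\{i,j\}$ with common modulus $r$, summing over $b_i,b_j$ and absorbing the interaction with the remaining pairs produces the bilinear quantity $J(c,r)$; the distinctness of the $k/2$ matched moduli may then be dropped at a cost already covered by the off-diagonal bound, and within each pair the part with $c \not\equiv 0 \Mod{r}$ is again estimated through $F_q$ and is negligible. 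What is left is $c \equiv 0 \Mod{r}$, for which $J(0,r) = \sideset{}{^*}\sum_{b \Mod{r}} |E_{q,a}(b/r)|^2$, so the diagonal collapses to $\sum_{\sigma\in\mathcal{B}_k}\prod_{\{i,j\}\in\sigma}\big(\sum_{1<r\mid Q}\frac{\mu^2(r)}{\varphi^2(r)}J(0,r)\big)$. By the $k=2$ case of Lemma~\ref{lem:vkii} the inner sum is $V_2(Q;q,a)$ up to an off-diagonal error of admissible size, so the diagonal equals $\mu_k\,V_2(Q;q,a)^{k/2}$, and combining with the off-diagonal estimate gives the proposition.

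The main obstacle is the off-diagonal step: one needs the Montgomery--Vaughan-type estimate in a form uniform in $q$ and with the dependence on $\varphi(Q)/Q$ made explicit, since in \cite{MR2104891} no modulus is present. A secondary point requiring care is checking that the ``$c\not\equiv0$'' portions of the diagonal and the error from dropping distinctness of the matched moduli are genuinely $O\big((N/q)^{k/2-1/(7k)}(\varphi(Q)/Q)^{-2^k-k/2}\big)$ and not larger; both reduce to the same $\min\{N/q+1,\|c/r\|^{-1}\}$ bound, but must be arranged so that within each matched pair at least one variable is summed against the $\|\cdot\|^{-1}$ weight rather than the trivial bound $N/q+1$.
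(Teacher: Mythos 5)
Your proposal is correct and follows essentially the same route as the paper's proof: the identity of Lemma \ref{lem:vkii}, the pointwise majorant $F_q(c/r)$ with $c\equiv bq \Mod{r}$, the Montgomery--Vaughan-type bound for the off-diagonal tuples, and the reduction of the diagonal over perfect matchings to $\mu_k V_2(Q;q,a)^{k/2}$ via $J(0,r)$. The only cosmetic difference is that for $k=2$ the constraint $b_1/r_1+b_2/r_2\equiv 0 \Mod{1}$ already forces $r_1=r_2$, so the inner sum equals $V_2(Q;q,a)$ exactly rather than up to an admissible error.
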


Therefore, it remains to compute $V_2(Q;q,a).$ See \cite[Lemma 3.2]{MR4843239} for an estimate without uniformity in the modulus $q$.

\begin{lemma} \label{lem:expsumii}
Given integers $N, q \geq 2$ for which $N \geq 2q$ and $\varphi(q) \leq \frac{N}{\log N},$ let $Q\geq 1$ be an integer which is divisible by all primes $p \leq \left( N/q\right)^2$ coprime to $q.$ Then for any $(a,q)=1,$ we have
\begin{align*}
V_2(Q;q,a)= H\sum_{1<r \mid Q} \frac{\mu^2(r)}{\varphi(r)}
-\frac{\varphi(q)}{q} \times \frac{N}{q}\log \frac{N}{q}
+O\left( \frac{\varphi(q)}{q} \times \frac{N}{q} (\log \log q)^3 \right),
\end{align*}
where $H=H_{N;q,a}:=|\{ 1 \leq n \leq N \,: \, n \equiv a \Mod{q} \}|.$

\begin{proof}
By definition, we have
\begin{align*}
\left| E_{q,a}(\alpha) \right|^2 &= \left| \sum_{\substack{1 \leq n \leq N\\n \equiv a \Mod{q}}} e(n \alpha) \right|^2 \\
&=\sum_{\substack{1 \leq n_1, n_2 \leq N\\n_1 \equiv n_2 \equiv a \Mod{q}}}
e((n_1-n_2)\alpha).
\end{align*}
Let $m=(n_1-n_2)/q.$ Then this becomes
\begin{align*}
\sum_{0 \leq |m| \leq  H }
\left(   H-|m| \right) e\left( qm\alpha \right).
\end{align*}
Therefore, we obtain
\begin{align*}
\sideset{}{^*}\sum_{b \Mod{r}} \left| E_{q,a} \left( \frac{b}{r} \right) \right|^2
=\sum_{0 \leq |m| \leq  H }
\left(  H -|m| \right)
c_r(qm),
\end{align*}
where $c_r(qm)$ is Ramanujan's sum. Since $c_r(0)=1$ and $c_r(-qm)=c_r(qm),$ this is
\begin{align*}
\varphi(r) H
+2 \sum_{1 \leq m \leq H}
\left( H - m \right) c_r(qm),
\end{align*}
so that by Lemma \ref{lem:vkii}
\begin{align} \label{eq:v2Qqa}
V_2(Q;q,a)=
H \sum_{1<r \mid Q} \frac{\mu^2(r)}{\varphi(r)}
+2\sum_{1 \leq m \leq H}
\left( H - m \right)
\sum_{1<r \mid Q} \frac{\mu^2(r)}{\varphi^2(r)}c_r(qm)
.
\end{align}
Using the multiplicity of Ramanujan's sum in $r$, we have
\begin{align*}
\sum_{r \mid Q} \frac{\mu^2(r)}{\varphi^2(r)}c_r(qm)
&=\prod_{\substack{p \mid Q\\p \mid qm}} \left( 1+\frac{1}{p-1} \right)\prod_{\substack{p \mid Q\\p \nmid qm}} \left( 1-\frac{1}{(p-1)^2} \right)
 \\
&= \frac{\varphi(q)}{q} 
\prod_{\substack{p \mid qQ\\p \mid qm}} \left( 1+\frac{1}{p-1} \right)
\prod_{\substack{p \mid qQ\\p \nmid qm}} \left( 1-\frac{1}{(p-1)^2} \right)
.
\end{align*}
Since by assumption $qQ$ is divisible by all primes $p \leq (N/q)^2,$ using the following Mertens-type estimates
\begin{align*}
\prod_{p | m} \left( 1+\frac{1}{p-1} \right) \ll \log m
\end{align*}
and
\begin{align*}
\prod_{p>(N/q)^2} \left( 1 - \frac{1}{(p-1)^2}\right)^{-1} =& 1+O \left(
\left( \frac{N}{q} \right)^{-2} \left(\log \frac{N}{q} \right)^{-1} \right)
\\
=& 1+O \left( H^{-2} (\log H)^{-1} \right),
\end{align*}
this is
\begin{align} \label{eq:qQet}
\frac{\varphi(q)}{q} 
\prod_{\substack{p \mid qm}} \left( 1+\frac{1}{p-1} \right)
\prod_{\substack{p \nmid qm \\ }} \left( 1-\frac{1}{(p-1)^2} \right)
+O \left(  H^{-2} \right).
\end{align}
The main term here is
\begin{align} \label{eq:qQmt}
 \frac{\varphi(q)}{q} 
\prod_{\substack{p \mid qm}} \left( 1-\frac{1}{p} \right)^{-2}   \left( 1-\frac{1}{p} \right)
\prod_{\substack{p \nmid qm \\ }} \left( 1-\frac{1}{p} \right)^{-2}   \left( 1-\frac{2}{p} \right)= \frac{\varphi(q)}{q} \times \mathfrak{S}(qm),
\end{align}
where $\mathfrak{S}(qm):=\mathfrak{S}(\{0,qm\})$ for convenience.
Appealing to \cite[Proposition 3]{MR1412558}, we have
\begin{align} \label{eq:sing}
\frac{\varphi(q)}{q} \times 2\sum_{1 \leq m \leq H}(H-m) \mathfrak{S}(qm) 
=H^2+\frac{\varphi(q)}{q} \left( - H\log H+ B_q H+I_{\delta}(H,q)\right)
\end{align}
for any $0 < \delta <\frac{1}{2},$ where 
$B_q:=1-\gamma-\log 2\pi-\sum_{p \mid q}\frac{\log p}{p-1}$ and
$I_{\delta}(H,q)$ is an integral 
\begin{align*}
\ll \mathfrak{S}(2q) H^{1-\delta} \delta^{-1} \left( \frac{1}{2}-\delta \right)^{-\frac{3}{2}}\prod_{p \mid q} \left( 1+\frac{1}{p^{1-\delta}} \right)
\left( 1+\frac{1}{p^{2(1-\delta)}} \right).
\end{align*}
Taking $\delta=\frac{1}{\log \log q},$ this is
$\ll H(\log \log q)^3$ (see \cite[p. 322]{MR1412558}), so that (\ref{eq:sing}) becomes
\begin{align*}
H^2+\frac{\varphi(q)}{q} \left( - H\log H+ B_q H +O( H(\log \log q)^3)\right).
\end{align*}
Therefore, combining with (\ref{eq:qQet}) and (\ref{eq:qQmt}), the expression (\ref{eq:v2Qqa}) is
\begin{gather*}
H \sum_{1<r \mid Q} \frac{\mu^2(r)}{\varphi(r)}    
+2\sum_{1 \leq m \leq H}
\left( H - m \right)
\left(\sum_{r \mid Q} \frac{\mu^2(r)}{\varphi^2(r)}c_r(qm)-1 \right) \\
= H \sum_{1<r \mid Q} \frac{\mu^2(r)}{\varphi(r)}   
+\frac{\varphi(q)}{q} \left( - H\log H+ A_q H+O(H(\log \log q)^3)\right)
+O(1),
\end{gather*}
where 
\begin{align*}
A_q:=B_q+\frac{q}{\varphi(q)}=1-\gamma-\log 2\pi-\sum_{p \mid q}\frac{\log p}{p-1}+\frac{q}{\varphi(q)}.
\end{align*}
Since 
$H \gg N/q \gg \frac{\log q}{\log \log q},$
$\sum_{p \mid q}\frac{\log p}{p-1} \ll \log\log q,$ and $\frac{q}{\varphi(q)} \ll \log\log q,$ the lemma follows.
\end{proof}

\end{lemma}

\section{Properties of singular series along progressions}

Given $\mathcal{D}=\{ d_1, \ldots, d_k\}$ and $q \geq 1,$ let $\mathfrak{S}_0\left( \mathcal{D};q \right)$ denote the normalized singular series
\begin{align*}
\left(\frac{\varphi(q)}{q}\right)^{-k}
\sum_{\substack{r_i>1\\(r_i,q)=1}}\prod_{i=1}^k\frac{\mu(r_{i})}{\varphi(r_{i})}
\sideset{}{^*}\sum_{\substack{b_{i} \Mod{r_{i}}\\ \sum_{i=1}^k b_{i}/r_{i} \equiv 0 \Mod{1}}}
e\left( \sum_{i=1}^k \frac{b_id_i}{r_i} \right).
\end{align*}
We are interested in such singular series due to the following lemma.
\begin{lemma} \label{lem:afterconjii}
Assume Conjecture \ref{conj:ii}. Given integers $k, q, D, N \geq 1$ for which $\varphi(q) \leq N/\log N$ and $D \leq N,$ let $\mathcal{D} \subseteq [1,D]$ be a set consisting of $k$ distinct multiples of $q$. Then for any $1 \leq x \leq N,$ we have
\begin{align*}
\sum_{\substack{1 \leq n \leq x\\(n,q)=1}} \prod_{d \in \mathcal{D}}\Lambda_0(n+d;q)=
\mathfrak{S}_0(\mathcal{D};q)\left| \{ 1 \leq n \leq x \,:\, (n,q)=1 \} \right| 
+O\left( \left( \frac{\varphi(q)}{q} \right)^{-k}  N^{1-\delta} \right),
\end{align*}
where
$\Lambda_0(n;q):=\Lambda(n)-\frac{q}{\varphi(q)}$ for $n \geq 1.$

\begin{proof}
Appealing to \cite[Lemma 3]{MR2104891} gives
\begin{align*}
\mathfrak{S}(\mathcal{D};q)
=\left(\frac{\varphi(q)}{q}\right)^{-k}
\sum_{\substack{r_i \geq 1\\(r_i,q)=1}}\prod_{i=1}^k\frac{\mu(r_{i})}{\varphi(r_{i})}
\sideset{}{^*}\sum_{\substack{b_{i} \Mod{r_{i}}\\ \sum_{i=1}^k b_{i}/r_{i} \equiv 0 \Mod{1}}}
e\left( \sum_{i=1}^k \frac{b_id_i}{r_i} \right),
\end{align*}
so that
\begin{align} \label{eq:altii}
\mathfrak{S}_0(\mathcal{D};q)
=\sum_{\mathcal{J} \subseteq \mathcal{D}}
\left( -\frac{q}{\varphi(q)} \right)^{k-|\mathcal{J}|} \mathfrak{S}(\mathcal{J};q).
\end{align}
Therefore, we have
\begin{align*}
\sum_{\substack{1 \leq n \leq x\\(n,q)=1}} \prod_{d \in \mathcal{D}}\Lambda_0(n+d;q) 
=& \sum_{\substack{1 \leq n \leq x\\(n,q)=1}} \prod_{d \in \mathcal{D}} \left(\Lambda(n+d;q) -\frac{q}{\varphi(q)} \right) \\
=& \sum_{\mathcal{J} \subseteq \mathcal{D}}
\left( -\frac{q}{\varphi(q)} \right)^{k-|\mathcal{J}|} 
\sum_{\substack{1 \leq n \leq x\\(n,q)=1}}
\prod_{d \in \mathcal{J}} \Lambda(n+d;q),
\end{align*}
which is
\begin{align*}
\sum_{\mathcal{J} \subseteq \mathcal{D}}
\left( -\frac{q}{\varphi(q)} \right)^{k-|\mathcal{J}|}
\mathfrak{S}(\mathcal{J};q)  \times
\left| \{ 1 \leq n \leq x \,:\, (n,q)=1 \} \right|
+O\left( \left( \frac{q}{\varphi(q)} \right)^{k}  N^{1-\delta} \right).
\end{align*}
under Conjecture \ref{conj:ii}. Then, the lemma follows from (\ref{eq:altii}).
\end{proof}

\end{lemma}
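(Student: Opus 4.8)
The plan is to deduce the lemma from Conjecture~\ref{conj:ii} by expanding the product of the normalized von Mangoldt factors over subsets of $\mathcal{D}$, applying the conjecture to each resulting sum, and then reassembling the main terms by means of the combinatorial identity~(\ref{eq:altii}).

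First I would expand by multilinearity,
\begin{align*}
\prod_{d\in\mathcal{D}}\Lambda_0(n+d;q)=\prod_{d\in\mathcal{D}}\Bigl(\Lambda(n+d)-\frac{q}{\varphi(q)}\Bigr)=\sum_{\mathcal{J}\subseteq\mathcal{D}}\Bigl(-\frac{q}{\varphi(q)}\Bigr)^{k-|\mathcal{J}|}\prod_{d\in\mathcal{J}}\Lambda(n+d),
\end{align*}
sum over $1\leq n\leq N$ with $(n,q)=1$, and interchange the two finite sums. Each inner sum $\sum_{1\leq n\leq N,\,(n,q)=1}\prod_{d\in\mathcal{J}}\Lambda(n+d)$ has exactly the shape covered by Conjecture~\ref{conj:ii}: indeed $\mathcal{J}$ is again a set of $|\mathcal{J}|$ distinct multiples of $q$ contained in $[1,D]$ (and for $\mathcal{J}=\emptyset$ the inner sum is simply $|\{n\leq N:(n,q)=1\}|=\mathfrak{S}(\emptyset;q)\,|\{n\leq N:(n,q)=1\}|$). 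Invoking the conjecture thus replaces each inner sum by $\mathfrak{S}(\mathcal{J};q)\,|\{n\leq N:(n,q)=1\}|+O\bigl((N+D)^{1-\delta}\bigr)$.

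It then remains to collect terms. There are $2^k$ subsets $\mathcal{J}\subseteq\mathcal{D}$ and $(q/\varphi(q))^{k-|\mathcal{J}|}\leq(q/\varphi(q))^k=(\varphi(q)/q)^{-k}$, so the accumulated error is $O\bigl((\varphi(q)/q)^{-k}(N+D)^{1-\delta}\bigr)$, the factor $2^k$ being absorbed into the ($k$-dependent) implied constant. For the main term I would invoke the Fourier expansion of the singular series from \cite[Lemma~3]{MR2104891}; separating, in each coordinate, the trivial modulus $r_i=1$ (which contributes a factor $1$ and forces $b_i=0$) from the moduli $r_i>1$ and grouping according to the set of ``active'' coordinates gives $\mathfrak{S}(\mathcal{D};q)=\sum_{\mathcal{I}\subseteq\mathcal{D}}(q/\varphi(q))^{k-|\mathcal{I}|}\mathfrak{S}_0(\mathcal{I};q)$, and binomial inversion over the subset lattice yields precisely~(\ref{eq:altii}), namely $\mathfrak{S}_0(\mathcal{D};q)=\sum_{\mathcal{J}\subseteq\mathcal{D}}(-q/\varphi(q))^{k-|\mathcal{J}|}\mathfrak{S}(\mathcal{J};q)$. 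Substituting~(\ref{eq:altii}) into the sum of collected main terms collapses it to $\mathfrak{S}_0(\mathcal{D};q)\,|\{n\leq N:(n,q)=1\}|$, which is the assertion.

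I do not foresee a genuine obstacle here: granted Conjecture~\ref{conj:ii}, the argument is essentially bookkeeping. The only points requiring care are (i) checking that every subset $\mathcal{J}$ still satisfies the hypotheses of the conjecture, so that it can be applied uniformly in $\mathcal{J}$, and (ii) keeping track of the explicit factor $(\varphi(q)/q)^{-k}$ through the accumulation of error terms, which is harmless for fixed $k$. The one external input is the singular-series Fourier identity of Montgomery--Soundararajan underlying~(\ref{eq:altii}).
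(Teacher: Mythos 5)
Your proposal is correct and follows essentially the same route as the paper: expand $\prod_{d}\Lambda_0(n+d;q)$ by multilinearity over subsets $\mathcal{J}\subseteq\mathcal{D}$, apply Conjecture~\ref{conj:ii} to each inner sum, absorb the $2^k(q/\varphi(q))^{k}$ loss into the error term, and collapse the main terms via the inversion identity~(\ref{eq:altii}) coming from the Fourier expansion of the singular series in \cite[Lemma~3]{MR2104891}. Your extra remark on how~(\ref{eq:altii}) follows (separating $r_i=1$ from $r_i>1$ and inverting over the subset lattice) just fills in a detail the paper leaves implicit.
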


We also need a lemma on the mean of the singular series along an arithmetic progression. This is essentially \cite[Theorem 1.2]{MR4843239} with $c_1 \equiv \cdots \equiv c_k \equiv a \Mod{r}$ (in Kuperberg's notation), except our error term again depends on the modulus explicitly, which is crucial to the proof of Theorem \ref{thm:variesa}.

\begin{lemma} \label{lem:sumofsingii}
Given integers $k \geq 1, N, q \geq 2$ for which $N \geq 2q$ and $\varphi(q) \leq \frac{N}{\log N},$ let
$R_k(N;q,a)$ denote the sum
\begin{align*}
\sum_{\substack{1 \leq n_{i} \leq N
\\ n_{i} \equiv a \Mod{q}\\ n_i \text{ \normalfont distinct}}}
\mathfrak{S}_0\left( \mathcal{N};q \right),
\end{align*}
where $\mathcal{N}:=\{ n_1,\ldots,n_k \}.$
Then 
\begin{align*}
R_k(N;q,a)=\mu_k 
\left( \left(\frac{\varphi(q)}{q}\right)^{-2} \left(
V_2(Q;q,a)- H \sum_{\substack{1<r \mid Q }} \frac{\mu^2 (r)}{\varphi(r)}
\right)\right)^{\frac{k}{2}}+O \left( \left(\frac{N}{q}\right)^{\frac{k}{2}-\frac{1}{7k}+\epsilon} \right),
\end{align*}
where 
$H=H_{N;q,a}:=|\{ 1 \leq n \leq N \,: \, n \equiv a \Mod{q} \}|.$

\begin{proof}
We adapt the proof of \cite[Theorem 2]{MR2104891}. Let $y \geq N/q.$ Then by definition
\begin{align*} 
\mathfrak{S}\left( \mathcal{N};q \right)
=\left( \frac{\varphi(q)}{q} \right)^{-k}
\prod_{\substack{p \leq y \\ p \nmid q}} \left( 1-\frac{1}{p} \right)^{-k}
\left( 1-\frac{v_p(\mathcal{N})}{p} \right)
\prod_{\substack{p > y \\ p \nmid q}} \left( 1-\frac{1}{p} \right)^{-k}
\left( 1-\frac{v_p(\mathcal{N})}{p} \right).
\end{align*}
Since $n_i \equiv a \Mod{q}$ for any $i$ and are distinct, we must have $v_p(\mathcal{N})=k$ if $p>y$ and $p \nmid q,$ so that the last product is 
\begin{align*}
\prod_{\substack{p > y \\ p \nmid q}} \left( 1-\frac{1}{p} \right)^{-k}
\left( 1-\frac{v_p(\mathcal{N})}{p} \right)
&=\prod_{\substack{p > y \\ p \nmid q}} \left( 1+O\left( \frac{1}{ p^2 } \right) \right)\\
&=1+O\left( \frac{1}{y\log y} \right).
\end{align*}
Also, since the first product is $\ll (\log y)^{k-1},$ we have
\begin{align} \label{eq:usefulater}
\mathfrak{S}(\mathcal{N};q) \ll (\varphi(q)/q)^{-k}(\log y)^{k-1},
\end{align}
or more precisely
\begin{align*}
\left( \frac{\varphi(q)}{q} \right)^{-k}
\prod_{\substack{p \leq y \\ p \nmid q}} \left( 1-\frac{1}{p} \right)^{-k}
\left( 1-\frac{v_p(\mathcal{N})}{p} \right)
+O
\left( \left(\frac{\varphi(q)}{q}\right)^{-k} \times \frac{(\log y)^{k-2}}{y} \right),
\end{align*}
which is by \cite[Lemma 3]{MR2104891}
\begin{gather*}
\left(\frac{\varphi(q)}{q}\right)^{-k}
\sum_{\substack{r_i \geq 1\\(r_i,q)=1\\p \mid r \implies p \leq y}}\prod_{i=1}^k\frac{\mu(r_{i})}{\varphi(r_{i})}
\sideset{}{^*}\sum_{\substack{b_{i} \Mod{r_{i}}\\ \sum_{i=1}^k b_{i}/r_{i} \equiv 0 \Mod{1}}}
e\left( \sum_{i=1}^k \frac{b_in_i}{r_i} \right) \\
+O
\left( \left(\frac{\varphi(q)}{q}\right)^{-k} \times \frac{(\log y)^{k-2}}{y} \right).
\end{gather*}
Let $y=(N/q)^{k+1}$ and $Q=\prod_{\substack{p \leq y, p \nmid q}}p.$ Then
\begin{align*}
R_k(N;q,a)=\left(\frac{\varphi(q)}{q}\right)^{-k}
\sum_{\substack{1<r_i \mid Q}}
\prod_{i=1}^k \frac{\mu(r_i)}{\varphi(r_i)}
S(r_1,\ldots,r_k; q,a)+O \left( \left(\frac{\varphi(q)}{q}\right)^{-k} \right),
\end{align*}
where
\begin{align*}
S(r_1,\ldots,r_k; q,a):=
\sum_{\substack{1 \leq n_{i} \leq N
\\ n_{i} \equiv a \Mod{q}\\ n_i \text{ \normalfont distinct}}} \,
\sideset{}{^*}\sum_{\substack{b_{i} \Mod{r_{i}}\\ \sum_{i=1}^k b_{i}/r_{i} \equiv 0 \Mod{1}}}
e\left( \sum_{i=1}^k \frac{b_in_i}{r_i} \right).
\end{align*}

To apply Proposition \ref{prop:mainii}, let us
remove the distinctness condition on the $n_i$'s. Denote
\begin{align*}
\delta_{i,j}:=
\begin{cases}
1 &\mbox{if $d_i = d_j,$} \\
0 &\mbox{otherwise.}
\end{cases}
\end{align*}
Then
\begin{align*}
\prod_{1 \leq i<j \leq k}\left(1-\delta_{i,j}\right)&=
\begin{cases}
 1 & \mbox{{\normalfont if $d_{i}$'s are distinct,} }\\
 0 & \mbox{{\normalfont otherwise. }} 
\end{cases}
\end{align*}
When the left-hand side above is expanded, we obtain a linear combination of products
of the $\delta$ symbols. Let $\Delta$ denote such a product, and $|\Delta|$ the number of factors in the
product. As in the proof of \cite[Theorem 2]{MR2104891}, we define an equivalence relation on these $\delta$-products by setting $\Delta_1 \sim \Delta_2$ if
$\Delta_{1}$ and $\Delta_2$ have the same value for all choices of $d_i$'s. Given a partition $\mathbb{P}_i=\{ \mathcal{S}_{1}, \ldots, \mathcal{S}_{M} \}$ of the set $\{ 1,\ldots,k_i \},$ let
\begin{align*}
\Delta_{\mathbb{P}}:=\prod_{m=1}^{M} 
\prod_{\substack{ i<j \\ i, j \in \mathcal{S}_m }}
\delta_{i,j}.
\end{align*}
Note that every equivalence class of $\delta$-products contains a unique $\Delta_{\mathbb{P}}$. Thus, we
have a bijective correspondence between equivalence classes of $\delta$-products and partitions
of $\{ 1,\ldots,k \}.$ For a partition $\mathbb{P}$, denote
\begin{align*}
w( \mathbb{P})
:=
\sum_{\Delta \sim \Delta_{\mathbb{P}}}
(-1)^{|\Delta|},
\end{align*}
so that
\begin{align*}
\prod_{1 \leq i<j \leq k}(1-\delta_{i,j})
=
\sum_{\mathbb{P}} w(\mathbb{P}) \Delta_{\mathbb{P}}.
\end{align*}
It follows that
\begin{align*}
S( r_1,\ldots,r_k; q, a )=
\sum_{\substack{\mathbb{P}=\{ \mathcal{S}_{1}, \ldots, \mathcal{S}_{M} \}}} w(\mathbb{P}) 
\sideset{}{^*}\sum_{\substack{b_{i} \Mod{r_{i}}\\ \sum_{i=1}^k b_{i}/r_{i} \equiv 0 \Mod{1}}}
 \prod_{\substack{m=1}}^{M} E_{q,a} 
\left( \sum_{i \in \mathcal{S}_{m}} \frac{b_i}{r_i} \right).
\end{align*}
As in \cite[p. 605--606]{MR2104891}, using (\ref{eq:F_q}) and the trivial bound
\begin{align*}
\left|  E_{q,a} 
\left( \sum_{i \in \mathcal{S}_{m}} \frac{b_i}{r_i} \right) \right|
\leq \frac{N}{q}+1, 
\end{align*}
the contribution to $R_{k}(N;q,a)$ from terms where $|S_{m}| > 2$ for some $m$ is
\begin{align*}
\ll (\varphi(q)/q)^{-k}\left( N/q \right)^{\frac{k-1}{2}+\epsilon},
\end{align*}
so that $R_{k}(N;q,a)$
\begin{gather} 
=\left(\frac{\varphi(q)}{q}\right)^{-k}
\sum_{\substack{\mathbb{P}=\{ \mathcal{S}_{1}, \ldots, \mathcal{S}_{M} \}\\|S_{m}| \leq 2}} w(\mathbb{P})  
\sum_{\substack{1<r_i \mid Q }}
\prod_{i=1}^k \frac{\mu(r_i)}{\varphi(r_i)} 
\sideset{}{^*}\sum_{\substack{b_i \Mod{r_i}\\ \sum_{i=1}^k b_i/r_i \equiv \, 0 \Mod{1}}}
 \prod_{m=1}^{M} E_{q,a} 
\left( \sum_{i \in \mathcal{S}_{m}} \frac{b_i}{r_i} \right) \nonumber \\
+ O \left( \left(\frac{\varphi(q)}{q}\right)^{-k} \left( \frac{N}{q} \right)^{\frac{k-1}{2}+\epsilon} \right). \label{eq:rkii}
\end{gather}

 Suppose that the partition $\mathbb{P}$ consists of $l$ doubleton sets $\mathcal{S}_{1}, \ldots, \mathcal{S}_{l}$ and $k-2l$ singleton sets $\mathcal{S}_{l+1}, \ldots, \mathcal{S}_{k-l}$.
Since no other $\delta$-product is equivalent to $\Delta_{\mathbb{P}}$, and $|\mathbb{P}| = l$, so $w(\mathbb{P}) = (-1)^{l}.$ Therefore, the term in $R_{k}(N;q,a)$ corresponding to 
such partition $\mathbb{P}$ is
\begin{align} \label{eq:p1p2ii}
(-1)^{l} \left(\frac{\varphi(q)}{q}\right)^{-k} \sum_{\substack{1<r_i \mid Q}} \prod_{i=1}^k \frac{\mu(r_i)}{\varphi(r_i)}
\sideset{}{^*}\sum_{\substack{b_i \Mod{r_i}\\ \sum_{i=1}^k b_i/r_i \equiv \, 0 \Mod{1}}} 
 \prod_{m=1}^{l} E_{q,a} 
\left( \sum_{i \in \mathcal{S}_{m}} \frac{b_i}{r_i} \right)
\prod_{m=l+1}^{k-l} E_{q,a} 
\left(  \frac{b_{\mathcal{S}_{m}}}{q_{\mathcal{S}_{m}}} \right),
\end{align}
where we are slightly abusing notation in the final product by identifying the singletons
with their unique element as in \cite[p. 66]{MR4843239}.

For $1 \leq m \leq l,$ let $c_{m}$ and $t_{m}$ be defined by the relations
\begin{align*}
\frac{c_{m}}{t_{m}} \equiv \sum_{i \in \mathcal{S}_{m}} \frac{b_i}{r_i} \Mod{1}, \qquad 1 \leq c_{m} \leq t_{m}, \qquad ( c_{m},  t_{m})=1,
\end{align*}
and denote
\begin{align*}
G \left( \frac{c}{t} \right):=
E_{q,a}\left( \frac{c}{t} \right)
\sum_{\substack{1<r_1, r_2 \mid Q } }\frac{\mu(r_1)\mu(r_2)}{\varphi(r_1)\varphi(r_2)}
\sideset{}{^*}\sum_{\substack{d_i \Mod{r_i}, i=1,2\\ \frac{d_1}{r_1}+\frac{d_2}{r_2} \equiv \, \frac{c}{t} \Mod{1}}} 1.
\end{align*}
Then, (\ref{eq:p1p2ii}) becomes
\begin{gather}
(-1)^{l} \left(\frac{\varphi(q)}{q}\right)^{-k}  \sum_{\substack{t_m  \mid Q\\ (t_m,q)=1 \\ 1 \leq m \leq l}}
\sideset{}{^*}\sum_{\substack{c_{m} \Mod{t_{m}}\\ 1 \leq m \leq l }}\prod_{\substack{m=1}}^{l}
G \left( \frac{c_{m}}{t_{m}} \right) \nonumber\\
\times 
\sum_{\substack{1<r_{m} \mid Q\\  l<m \leq k-l}} \quad
\sideset{}{^*}\sum_{\substack{b_m \Mod{r_{m}}\\ l<m \leq k-l\\\sum_{m}\frac{b_{m}}{r_{m}}+
\sum_{m}\frac{c_{m}}{t_{m}}
\equiv \, 0 \Mod{1}}}
\prod_{\substack{m=l+1}}^{k-l}
\frac{\mu(r_{m})}{\varphi(r_{m})}
E_{q,a}\left( \frac{b_{m}}{r_{m}} \right).   \label{eq:Gii}
\end{gather}
As in \cite[p. 607--609]{MR2104891}, the main contribution 
comes from those $m$'s for which $r_{m}=1,$ and the contribution of the remaining terms is again $\ll  (\varphi(q)/q)^{-k} \left( N/q \right)^{\frac{k-1}{2}+\epsilon}.$
Also, since
\begin{align*}
G(1)=H \sum_{\substack{1<r \mid Q }} \frac{\mu^2 (r)}{\varphi(r)},
\end{align*}
where $H:=|\{ 1 \leq n \leq N\,: \, n \equiv a \Mod{q} \}|,$ the expression (\ref{eq:Gii}) is
\begin{align*}
\left(\frac{\varphi(q)}{q}\right)^{-k} \left( - H \sum_{\substack{1<r \mid Q }} \frac{\mu^2 (r)}{\varphi(r)} \right)^{l}
V_{k-2l}(Q;q,a)
+O \left( \left(\frac{\varphi(q)}{q}\right)^{-k} \left( \frac{N}{q} \right)^{\frac{k-1}{2}+\epsilon} \right),
\end{align*}
so that (\ref{eq:rkii}) becomes
\begin{gather*}
\left(\frac{\varphi(q)}{q}\right)^{-k} \sum_{\substack{0 \leq l \leq k/2}}  
\left(  -H \sum_{\substack{1<r \mid Q }} \frac{\mu^2 (r)}{\varphi(r)} \right)^{l}
V_{k-2l}(Q;q,a)
\sum_{\substack{\mathbb{P}=\{ \mathcal{S}_{1}, \ldots, \mathcal{S}_{k-l} \}\\|\mathcal{S}_{m}| = 2, 1 \leq m \leq l\\ |\mathcal{S}_{m}| = 1, l < m \leq k-l}} 1 \\
+O \left( \left(\frac{\varphi(q)}{q}\right)^{-k} \left( \frac{N}{q} \right)^{\frac{k-1}{2}+\epsilon} \right) \\
= \left(\frac{\varphi(q)}{q}\right)^{-k} \sum_{\substack{0 \leq l \leq k/2}}
 {k \choose 2l} \mu_{l} \times
\left( - H \sum_{\substack{1<r \mid Q }} \frac{\mu^2 (r)}{\varphi(r)} \right)^{l}
V_{k-2l}(Q;q,a) \\
+O \left( \left(\frac{\varphi(q)}{q}\right)^{-k} \left( \frac{N}{q} \right)^{\frac{k-1}{2}+\epsilon} \right).
\end{gather*}
Appealing to Proposition \ref{prop:mainii}, this is
\begin{gather*}
\left(\frac{\varphi(q)}{q}\right)^{-k} \sum_{\substack{0 \leq l \leq k/2}}
 \mu_{2l} \mu_{k-2l}  {k \choose 2l} \times
\left( -H \sum_{\substack{1<r \mid Q }} \frac{\mu^2 (r)}{\varphi(r)} \right)^{l}
V_{2}(Q;q,a)^{\frac{k}{2}-l} \\
+O \left( \left(\frac{\varphi(q)}{q}\right)^{-k} \left( \frac{N}{q} \right)^{\frac{k}{2}-\frac{1}{7k}} \left( \frac{\varphi(Q)}{Q} \right)^{-2^k-\frac{k}{2}} \right),
\end{gather*}
where we use the trivial bound 
\begin{align*}
\sum_{\substack{1<r \mid Q }} \frac{\mu^2 (r)}{\varphi(r)} \ll_k \log \frac{N}{q}. 
\end{align*}
Since by assumption $N/q \gg \log q/\log \log q$ and $Q=\prod_{p \leq y, p \nmid q}p,$
it follows that the error term is $\ll \left( N/q\right)^{\frac{k}{2}-\frac{1}{7k}+\epsilon}.$
Also, since $\mu_{2l} \mu_{k-2l}{k \choose 2l}=\mu_{k}
{k \choose l},$
the main term can be simplified to
\begin{align*}
\left(\frac{\varphi(q)}{q}\right)^{-k} \mu_k
\left( 
V_2(Q;q,a)- H \sum_{\substack{1<r \mid Q }} \frac{\mu^2 (r)}{\varphi(r)}
\right)^{\frac{k}{2}}
\end{align*}
and the lemma follows.
\end{proof}



\end{lemma}

\section{Proof of Theorem \ref{thm:variesa}}
We shall adapt the proof of \cite[Theorem 3]{MR2104891}. When $K=0,$ it is clear that 
$M_{K}(N;q)=1$ holds unconditionally. 

When $K=1,$ we have
\begin{align*}
M_{1}(N;q)&=\frac{1}{\varphi(q)}
\sideset{}{^*}\sum_{a \Mod{q}} \frac{1}{\sqrt{N\log q/\varphi(q)}} \sum_{\substack{1 \leq n \leq N\\n \equiv a \Mod{q}}} \left( \Lambda(n)-\frac{q}{\varphi(q)} \right) \\
&=\frac{1}{\sqrt{\varphi(q) N \log q }}
\left( \sum_{\substack{1 \leq n \leq N\\(n,q)=1 }} \Lambda(n)-\frac{q}{\varphi(q)} \left| \{ 1 \leq n \leq N \,:\, (n,q)=1 \} \right|   \right),
\end{align*}
which is $\ll N^{-\delta} \sqrt{N/\varphi(q)\log q}$ under Conjecture \ref{conj:ii}. 

Therefore, we can assume $K \geq 2.$ Recall $\Lambda_0(n;q):=\Lambda(n)-\frac{q}{\varphi(q)},$ so that
\begin{align*}
\sideset{}{^*}\sum_{a \Mod{q}}
\left( \sum_{\substack{1 \leq n \leq N\\n\equiv a \Mod{q}}} \left(\Lambda(n)-\frac{q}{\varphi(q)}  \right) \right)^K
=\sideset{}{^*}\sum_{a \Mod{q}}
\sum_{\substack{1 \leq n_i\leq N\\n_i \equiv a \Mod{q}}}\prod_{i=1}^K\Lambda_0(n_i;q).
\end{align*}
 We group the distinct values of $n_{i}$ according to their multiplicities $M_{i}.$ Then, this becomes
\begin{align} \label{eq:distinctingii}
\sum_{k=1}^K \sum_{\substack{M_i \geq 1\\ \sum_{i=1}^k M_i=K}} {K \choose M_1,\ldots,M_k}
\sideset{}{^*}\sum_{a \Mod{q}}
\sum_{\substack{1 \leq n_1<\cdots<n_k \leq N\\n_i \equiv a \Mod{q}}}\prod_{i=1}^k \Lambda_0(n_i;q)^{M_i}.
\end{align}
Denote $\Lambda_m(n;q):=\Lambda(n)^m\Lambda_0(n;q).$ Then the binomial theorem gives
\begin{align*}
\Lambda_0(n;q)^M&=\left( \Lambda(n)-\frac{q}{\varphi(q)} \right)^{M-1}\Lambda_0(n;q)\\
&=\sum_{m=0}^{M-1}\left(-\frac{q}{\varphi(q)}\right)^{M-1-m}{M-1 \choose m}\Lambda_m(n;q).
\end{align*}
Inserting into (\ref{eq:distinctingii}), we obtain
\begin{align} \label{eq:b4deflkii}
 \sum_{k=1}^K \sum_{\substack{M_i \geq 1\\ \sum_{i=1}^k M_i=K}} {K \choose M_1,\ldots,M_k} 
 \sum_{0 \leq m_i <M_i}\prod_{i=1}^k 
 \left( - \frac{q}{\varphi(q)} \right)^{M_i-1-m_i}
 {M_i-1 \choose m_i}L_k(\boldsymbol{m};q),
\end{align}
where
\begin{align*}
L_k(\boldsymbol{m};q):=\sideset{}{^*}\sum_{a \Mod{q}}\sum_{\substack{1 \leq n_1 <\cdots<n_k \leq N\\n_i \equiv a \Mod{q}}}
\prod_{i=1}^k \Lambda_{m_i}(n_i;q).
\end{align*}
Making the change of variables $n_i=n+d_i$ for $i=1,\ldots,k,$ this becomes
\begin{align} \label{eq:aftercovii}
 \sum_{\substack{0=d_1<\cdots<d_k<N\\d_i \equiv 0 \Mod{q}}}
\sum_{\substack{1 \leq n \leq N-d_k\\(n,q)=1}}
\prod_{i=1}^k \Lambda_{m_i}(n+d_i;q).
\end{align}

To estimate $L_k(\boldsymbol{m};q),$ let us denote $\mathcal{K}:=\{1,\ldots,k \},$ and introduce the following notation:
\begin{align*}
\mathcal{H}:=\{ i \in \mathcal{K} \,:\, m_i \geq 1 \}, \quad &h=|\mathcal{H}|; \\
\mathcal{I}:=\{ i \in \mathcal{K} \,:\, m_i=0 \},
\quad &k-h=|\mathcal{I}|; \\
\mathcal{J} \subseteq \mathcal{K}, \quad &j=|\mathcal{J}|.
\end{align*}
Let $\boldsymbol{m}=(m_1,\ldots,m_k)$ and $\boldsymbol{d}=(d_1,\ldots,d_k)$ be fixed for the moment. Then
\begin{align*}
\prod_{i \in \mathcal{I}} \Lambda_0(n+d_i;q) 
\prod_{i \in \mathcal{H}}\Lambda(n+d_i;q)
&=\prod_{i \in \mathcal{I}} \Lambda_0(n+d_i;q) 
\prod_{i \in \mathcal{H}} \left(\Lambda_0(n+d_i;q) + \frac{q}{\varphi(q)} \right) \\
&= \sum_{\mathcal{I} \subseteq \mathcal{J} \subseteq \mathcal{K}} 
\left( \frac{q}{\varphi(q)} \right)^{k-j}
\prod_{i \in \mathcal{J}} \Lambda_0(n+d_i;q).
\end{align*}
Let $1 \leq x \leq N.$ Assume Conjecture \ref{conj:ii}. Then, it follows from Lemma \ref{lem:afterconjii} that
\begin{gather*}
\sum_{\substack{1 \leq n \leq x \\ (n,q)=1}}
\prod_{i \in \mathcal{I}} \Lambda_0(n+d_i;q) 
\prod_{i \in \mathcal{H}}\Lambda(n+d_i;q)\\
=| \{ 1 \leq n \leq x \,: \, (n,q)=1 \} |
 \sum_{\mathcal{I} \subseteq \mathcal{J} \subseteq \mathcal{K}} \left( \frac{q}{\varphi(q)} \right)^{k-j} \mathfrak{S}_0 \left( \mathcal{D}_{\mathcal{J}} \right) +O \left(\left(\frac{q}{\varphi(q)}\right)^h N^{1-\delta} \right),
\end{gather*}
where $\mathcal{D}_{\mathcal{J}}=\{ d_i \,:\, i \in \mathcal{J} \}.$ 
Let us denote 
\begin{align*}
a_n:=\prod_{i \in \mathcal{I}} \Lambda_0(n+d_i;q) 
\prod_{i \in \mathcal{H}}\Lambda(n+d_i;q)
\end{align*}
 and 
 \begin{align*}
 c:= \sum_{\mathcal{I} \subseteq \mathcal{J} \subseteq \mathcal{K}} \left( \frac{q}{\varphi(q)} \right)^{k-j} \mathfrak{S}_0 \left( \mathcal{D}_{\mathcal{J}} \right),
 \end{align*}
so that the balanced sum
\begin{align} \label{eq:xxet}
\widetilde{A}(x):=\sum_{\substack{1 \leq n \leq x \\ (n,q)=1}}
(a_n-c) \ll \left(\frac{q}{\varphi(q)}\right)^h N^{1-\delta}.
\end{align}
We also denote 
\begin{align*}
f(n):=\prod_{i \in \mathcal{H}}
(\log (n+d_i))^{m_i-1}
\left( \log (n+d_i)-\frac{q}{\varphi(q)} \right).
\end{align*}
Then 
\begin{align*}
\sum_{\substack{1 \leq n \leq N-d_k\\(n,q)=1}} a_nf(n)=
c\sum_{\substack{1 \leq n \leq N-d_k\\(n,q)=1}} f(n)+
\sum_{\substack{1 \leq n \leq N-d_k\\(n,q)=1}} (a_n-c)f(n).
\end{align*}
By partial summation, the last sum is
\begin{align*}
\int_{1}^{N-d_k} f(x) d\widetilde{A}(x) 
&= \widetilde{A}(N-d_k)f(N-d_k) -
\int_1^{N-d_k} \widetilde{A}(x) f'(x) dx.
\end{align*}
Therefore, using (\ref{eq:xxet}), we have
\begin{align} \label{eq:anfn}
\sum_{\substack{1 \leq n \leq N-d_k\\(n,q)=1}} a_nf(n)=
c\sum_{\substack{1 \leq n \leq N-d_k\\(n,q)=1}} f(n)
+O\left( \left(\frac{q}{\varphi(q)}\right)^h N^{1-\delta}(\log N)^K \right).
\end{align}
For $m>0,$ $\Lambda_m(n;q)$ is nonzero when $n$ is a prime-power, and
\begin{align*}
\Lambda_m(n;q)=\Lambda(n)(\log n)^{m-1}\left(\log n -\frac{q}{\varphi(q)}\right)
\end{align*}
if $n$ is a prime. Thus if $n$ is an integer such that $n+d_i$ is prime for all $i \in \mathcal{H},$ then $a_nf(n)=\prod_{i=1}^k \Lambda_{m_i}(n+d_i;q).$ Also, since the contribution from those $n$ for which $n+d_i$ is a higher power of a prime for one or more $i \in \mathcal{H}$ is $\ll N^{\frac{1}{2}+\epsilon}$ to (\ref{eq:anfn}), we have
\begin{align*}
\sum_{\substack{1 \leq n \leq N-d_k\\(n,q)=1}} \prod_{i=1}^k \Lambda_{m_i}(n+d_i;q)
= c
\sum_{\substack{1 \leq n\leq N-d_k\\(n,q)=1}} 
f(n) 
+O\left( \left(\frac{q}{\varphi(q)}\right)^h N^{1-\delta}(\log N)^K \right).
\end{align*}
Inserting into (\ref{eq:aftercovii}) followed by reversing the change of variables $n_i=n+d_i$ gives $L_k(\boldsymbol{m};q)$
\begin{gather} 
= \frac{1}{k!}   \sum_{\mathcal{I} \subseteq \mathcal{J} \subseteq \mathcal{K}} \left( \frac{q}{\varphi(q)} \right)^{k-j} \sideset{}{^*}\sum_{a \Mod{q}} 
\sum_{\substack{1 \leq n_i \leq N\\n_i \equiv a \Mod{q} \\ n_i \text{ distinct}}}
\mathfrak{S}_0 \left( \mathcal{N}_{\mathcal{J}} \right)
\prod_{i \in \mathcal{H}}
(\log n_i)^{m_i-1}
\left( \log n_i-\frac{q}{\varphi(q)} \right) \nonumber \\
+O\left( \left(\frac{N}{\varphi(q)}\right)^k N^{1-\delta}(\log N)^K\right). \label{eq:weaketii}
\end{gather}
By definition, the main term here is
\begin{align} \label{eq:mtlkii}
\frac{1}{k!} \sum_{\mathcal{I} \subseteq \mathcal{J} \subseteq \mathcal{K}} \left( \frac{q}{\varphi(q)} \right)^{k-j} \sideset{}{^*}\sum_{a \Mod{q}} 
R_{j}(N;q,a)
\sum_{\substack{1 \leq n_i \leq N, i \notin \mathcal{J}\\n_i \equiv a \Mod{q} \\ n_i \text{ distinct}}}
\prod_{i \in \mathcal{H}}
(\log n_i)^{m_i-1}
\left( \log n_i-\frac{q}{\varphi(q)} \right).
\end{align}
For convenience, let us denote
\begin{align*}
S_{\boldsymbol{m}}(N;q,a):=
 \sum_{\substack{1 \leq n_i \leq N, i \in \mathcal{H}\\n_i \equiv a \Mod{q} \\ n_i \text{ distinct}}}
\prod_{i \in \mathcal{H}}
(\log n_i)^{m_i-1}
\left( \log n_i-\frac{q}{\varphi(q)} \right).
\end{align*}
Then, isolating the term $\mathcal{J}=\mathcal{I},$ it follows from Lemma \ref{lem:expsumii} and Lemma \ref{lem:sumofsingii} that (\ref{eq:mtlkii}) is
\begin{gather} 
\frac{1}{k!} \left( \frac{q}{\varphi(q)} \right)^{h} 
 \sideset{}{^*}\sum_{a \Mod{q}}
R_{k-h}(N;q,a)
S_{\boldsymbol{m}}(N;q,a) \nonumber\\
+O \left( \varphi(q) (\log N)^{\sum_{i \in \mathcal{H}} m_i}
\sum_{k-h+1 \leq j \leq k}
\left(\frac{N}{\varphi(q)} \right)^{j/2} \left(\log \frac{N}{q} + (\log \log q)^3 \right)^{j/2} 
\left( \frac{N}{\varphi(q)} \right)^{k-j}
\right) \nonumber\\
= \frac{1}{k!}  \left( \frac{q}{\varphi(q)} \right)^{h}  \sideset{}{^*}\sum_{a \Mod{q}}
R_{k-h}(N;q,a)
S_{\boldsymbol{m}}(N;q,a) \nonumber\\
+
O\left( \varphi(q) (\log N)^{K-k} \left(\frac{N}{\varphi(q)} \right)^{(k-h+1)/2} \left(\log \frac{N}{q}+(\log \log q)^3 \right)^{(k-h+1)/2} 
 \left( \frac{N}{\varphi(q)} \right)^{h-1} \right), \label{eq:RSii}
\end{gather}
where we have used the fact that
\begin{align*}
\sum_{i \in \mathcal{H}}m_i = \sum_{i=1}^k m_i
\leq \sum_{i=1}^k (M_i-1) =K-k.
\end{align*}

Before we proceed, note that
\begin{align} \label{eq:K>h+k}
K=\sum_{i=1}^k M_i =
\sum_{i \in \mathcal{H}} M_i 
+\sum_{i \in \mathcal{J}} M_i 
\geq 2|\mathcal{H}|+|\mathcal{J}| =2h+(k-h)=h+k.
\end{align}
First, we show that the contribution to (\ref{eq:b4deflkii}) from those terms for which $h+k<K$ is negligible.  Since 
\begin{align*}
R_{k-h}(N;q,a) \ll \left( \frac{N}{\varphi(q)} \right)^{(k-h)/2}  \left( 
 \log \frac{N}{q}+(\log \log q)^3 \right)^{(k-h)/2}
\end{align*}
 by Lemma \ref{lem:expsumii} and Lemma \ref{lem:sumofsingii}, it follows from (\ref{eq:weaketii}), (\ref{eq:RSii}) and (\ref{eq:K>h+k}) that 
\begin{gather*}
L_{k}(\boldsymbol{m};q) \ll \varphi(q) (\log N)^{K-k}
\left(\frac{N}{\varphi(q)} \right)^{(k-h)/2}
\left( \log \frac{N}{q} + (\log\log q)^3 \right)^{(k-h)/2}
\left( \frac{N}{\varphi(q)} \right)^{h} \\
+\left(\frac{N}{\varphi(q)}\right)^k N^{1-\delta}(\log N)^K \\
\ll \varphi(q)  (\log N)^K \left( \frac{N}{\varphi(q)\log N} \right)^{(k+h)/2}
\left( \frac{\log (N/q)+(\log \log q)^3}{\log N} \right)^{(k-h)/2} \\
+\left(\frac{N}{\varphi(q)}\right)^k N^{1-\delta}(\log N)^K.
\end{gather*}
Therefore, using (\ref{eq:K>h+k}), the contribution of these terms to (\ref{eq:b4deflkii}) is
\begin{align} \label{eq:etK>h+kii}
\ll \varphi(q) (\log N)^K \left( \frac{N}{\varphi(q)\log N} \right)^{(K-1)/2}+\left(\frac{N}{\varphi(q)}\right)^K N^{1-\delta}(\log N)^K.
\end{align}

Finally, we consider those terms in (\ref{eq:b4deflkii}) for which $h+k=K.$ Since $h \leq k,$ it follows that $k \geq K/2.$ We also have $h=|\mathcal{H}|=K-k,$ and $j=|\mathcal{I}|=k-h=2k-K.$ Since equality holds in (\ref{eq:K>h+k}), it follows that $M_i=2$ for all $i \in \mathcal{H}$ and that $M_i=1$ for all $i \in \mathcal{I}.$ Thus $m_i=M_i-1$ for all $i,$ and for such $\boldsymbol{m}$ we have 
\begin{align*}
S_{\boldsymbol{m}}(N;q,a)&=
\sum_{\substack{1 \leq n_i \leq N, i \in \mathcal{H}\\n_i \equiv a \Mod{q} \\ n_i \text{ distinct}}} \prod_{i \in \mathcal{H}}\left( \log n_i -\frac{q}{\varphi(q)} \right) \\
&= \left( \sum_{\substack{1 \leq n \leq N\\n \equiv a \Mod{q} \\}} \left( \log n -\frac{q}{\varphi(q)} \right) \right)^{K-k}+O \left(
\left( \frac{N}{q} \right)^{K-k-1}(\log N)^{K-k}
\right)\\
&=: S(N;q,a)^{K-k}+O \left(
\left( \frac{N}{q} \right)^{K-k-1}(\log N)^{K-k}
\right).
\end{align*}
Again, since 
\begin{gather*}
R_{2k-K}(N;q,a) \ll \left( \frac{N}{\varphi(q) } \left(\log \frac{N}{q} +(\log \log q)^3 \right) \right)^{(2k-K)/2}
\end{gather*}
by Lemma \ref{lem:expsumii} and Lemma \ref{lem:sumofsingii}, it follows from (\ref{eq:weaketii}) and (\ref{eq:RSii}) that
\begin{align*}
L_k(\boldsymbol{m};q)&=\frac{1}{k!} 
\left( \frac{q}{\varphi(q)} \right)^{K-k}
\sideset{}{^*}\sum_{a \Mod{q}} 
R_{2k-K}(N;q,a)S(N;q,a)^{K-k}\\
&+O\left( \varphi(q) (\log N)^{K-k} \left(\frac{N}{\varphi(q)} \left(\log \frac{N}{q}+(\log \log q)^3 \right) \right)^{(2k-K+1)/2} 
 \left( \frac{N}{\varphi(q)} \right)^{K-k-1} \right)\\
&+O\left( \left(\frac{N}{\varphi(q)}\right)^k N^{1-\delta}(\log N)^K\right) \\
&=\frac{1}{k!} \left( \frac{q}{\varphi(q)} \right)^{K-k}
\sideset{}{^*}\sum_{a \Mod{q}} 
R_{2k-K}(N;q,a)S(N;q,a)^{K-k}\\
&+O\left( \varphi(q) (\log N)^{K} \left(\frac{N}{\varphi(q) \log N}  \right)^{(K-1)/2}
 \right)+O\left( \left(\frac{N}{\varphi(q)}\right)^K N^{1-\delta}(\log N)^K\right).
\end{align*}
Once $k$ is selected, there are precisely ${k \choose K-k}$ ways of choosing the set $\mathcal{H},$ and hence using (\ref{eq:etK>h+kii}) and the above, we have
\begin{gather*}
\sideset{}{^*}\sum_{a \Mod{q}}
\left( \sum_{\substack{1 \leq n \leq N\\n\equiv a \Mod{q}}} \left(\Lambda(n)-\frac{q}{\varphi(q)}  \right) \right)^K \\
=
\sum_{K/2 \leq k \leq K} \frac{K!}{k! 2^{K-k}}
{k \choose K-k} 
\left( \frac{q}{\varphi(q)} \right)^{K-k}
\sideset{}{^*}\sum_{a \Mod{q}} 
R_{2k-K}(N;q,a)S(N;q,a)^{K-k} \\ 
+O\left( \varphi(q) (\log N)^{K} \left(\frac{N}{\varphi(q) \log N}  \right)^{(K-1)/2}
 \right)+O\left( \left(\frac{N}{\varphi(q)}\right)^K N^{1-\delta}(\log N)^K\right).
\end{gather*}

Suppose $K$ is odd. Then so is $2k-K,$ and hence by Lemma \ref{lem:expsumii} and Lemma \ref{lem:sumofsingii}, the main term here is
\begin{align*}
\ll \varphi(q) (\log N)^{\frac{K}{2}}
\left( \frac{N}{\varphi(q)} \right)^{\frac{K}{2}-\frac{1}{7K}+\epsilon},
\end{align*}
so that
\begin{gather*}
\sideset{}{^*}\sum_{a \Mod{q}}
\left( \sum_{\substack{1 \leq n \leq N\\n\equiv a \Mod{q}}} \left(\Lambda(n)-\frac{q}{\varphi(q)}  \right) \right)^K \\
 \ll 
\varphi(q) 
\left( \frac{N \log N}{\varphi(q)} \right)^{\frac{K}{2}}
\left( \frac{N}{\varphi(q)\log N} \right)^{-\frac{1}{8K}}
+\left(\frac{N}{\varphi(q)}\right)^K N^{1-\delta}(\log N)^K.
\end{gather*}

Suppose $K$ is even. Then, it follows from Lemma \ref{lem:sumofsingii} that
\begin{gather*}
\sideset{}{^*}\sum_{a \Mod{q}}
\left( \sum_{\substack{1 \leq n \leq N\\n\equiv a \Mod{q}}} \left(\Lambda(n)-\frac{q}{\varphi(q)}  \right) \right)^K \\
=\sum_{k=K/2}^K
\frac{K!}{k!2^{K-k}} {k \choose K-k} 
\left( \frac{q}{\varphi(q)} \right)^{K-k} \\
\times \mu_{2k-K}
\left( \left(\frac{\varphi(q)}{q}\right)^{-2} \left(
V_2(Q;q,a)- H \sum_{\substack{1<r \mid Q }} \frac{\mu^2 (r)}{\varphi(r)}
\right) \right)^{k-\frac{K}{2}} 
  \sideset{}{^*}\sum_{a \Mod{q}} S(N;q,a)^{K-k} \\
+O\left(\varphi(q) 
\left( \frac{N \log N}{\varphi(q)} \right)^{\frac{K}{2}}
\left( \frac{N}{\varphi(q)\log N} \right)^{-\frac{1}{8K}}
+ \left(\frac{N}{\varphi(q)}\right)^K N^{1-\delta}(\log N)^K\right).
\end{gather*}
Making the change of variables $k=\frac{K}{2}+l,$ 
the main term here becomes
\begin{gather*}
\mu_K \sideset{}{^*}\sum_{a \Mod{q}} \sum_{l=0}^{K/2} 
{K/2 \choose l} \left( \frac{q}{\varphi(q)}\sum_{\substack{1 \leq n \leq N\\n \equiv a \Mod{q}}} \left( \log n -\frac{q}{\varphi(q)} \right) \right)^{\frac{K}{2}-l} \\
\times  \left(  \left(\frac{\varphi(q)}{q}\right)^{-2} \left(
V_2(Q;q,a)- H \sum_{\substack{1<r \mid Q }} \frac{\mu^2 (r)}{\varphi(r)}
\right) \right)^{l},
\end{gather*}
which is by the binomial theorem
\begin{align*}
 \mu_K  \sideset{}{^*}\sum_{a \Mod{q}}
\left( 
 \frac{q}{\varphi(q)}\sum_{\substack{1 \leq n \leq N\\n \equiv a \Mod{q}}} \left( \log n -\frac{q}{\varphi(q)} \right)
 +
  \left(\frac{\varphi(q)}{q}\right)^{-2} \left(
V_2(Q;q,a)- H \sum_{\substack{1<r \mid Q }} \frac{\mu^2 (r)}{\varphi(r)}
\right)
\right)^{\frac{K}{2}}.
\end{align*}
By partial summation, we have
\begin{align*}
\sum_{\substack{1 \leq n \leq N\\n \equiv a \Mod{q}}}  \log n = \frac{N}{q}\log N -\frac{N}{q}+O(\log N).
\end{align*}
Finally, the theorem follows readily from Lemma \ref{lem:expsumii}.

\section{Proof of Theorem \ref{thm:poisson}}

We need an estimate for the sum of singular series along an arithmetic progression.

\begin{lemma} \label{lem:sum}
Let $k \geq 1$ and $(a,q)=1.$ Then for any $N \geq 2q,$ we have
\begin{align*}
\sum_{\substack{1 \leq n_i\leq N\\n_i \equiv a \Mod{q}\\ n_i \text{ \normalfont distinct}}}
\mathfrak{S}(\mathcal{N};q)=\left( \frac{N}{\varphi(q)} \right)^k
+O \left( \left( \frac{N}{\varphi(q)} \right)^{k-1} 
\log \frac{N}{q}  \right),
\end{align*}
where $\mathcal{N}=\{ n_1,\ldots,n_k \}.$
\end{lemma}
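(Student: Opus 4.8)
The plan is to deduce the lemma from the estimate for $R_k(N;q,a)$ obtained in Lemma~\ref{lem:sumofsingii} --- more precisely from the crude upper bound (\ref{eq:crudeupper2}), which holds for \emph{every} modulus $q\ge 2$ and so does not require the Erd\H{o}s--Kac-type hypothesis $\omega(q)\le(\log\log q)^2$. The bridge between the singular series $\mathfrak{S}$ and its balanced counterpart $\mathfrak{S}_0$ is the relation obtained by inverting (\ref{eq:altii}) over the Boolean lattice of subsets: for $\mathcal{N}=\{n_1,\dots,n_k\}$,
\begin{align*}
\mathfrak{S}(\mathcal{N};q)=\sum_{\mathcal{J}\subseteq\mathcal{N}}\left(\frac{q}{\varphi(q)}\right)^{k-|\mathcal{J}|}\mathfrak{S}_0(\mathcal{J};q),
\end{align*}
where $\mathfrak{S}_0(\emptyset;q)=1$ and $\mathfrak{S}_0(\{n\};q)=0$ for every single $n$, because for $k=1$ the inner condition $b_1/r_1\in\mathbb{Z}$ with $r_1>1$ and $(b_1,r_1)=1$ has no solution.

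Writing $\mathcal{A}:=\{1\le n\le N:\,n\equiv a\Mod{q}\}$ and $m:=|\mathcal{A}|=N/q+O(1)$, I would substitute this expansion, interchange the order of summation, and note that for a fixed $j$-element index set $T\subseteq\{1,\dots,k\}$ the summand $\mathfrak{S}_0(\{n_i:i\in T\};q)$ depends only on the coordinates indexed by $T$; summing the remaining $k-j$ coordinates over elements of $\mathcal{A}$ distinct from one another and from the already-chosen ones contributes the fixed factor $(m-j)(m-j-1)\cdots(m-k+1)$. This yields
\begin{align*}
\sum_{\substack{1\le n_i\le N\\ n_i\equiv a\Mod{q}\\ n_i\text{ distinct}}}\mathfrak{S}(\mathcal{N};q)=\sum_{j=0}^{k}\binom{k}{j}\left(\frac{q}{\varphi(q)}\right)^{k-j}(m-j)(m-j-1)\cdots(m-k+1)\,R_j(N;q,a),
\end{align*}
with $R_0=1$, $R_1=0$, and with the falling factorial being the empty product $1$ when $j=k$.

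It then remains to isolate the main term. The term $j=0$ equals $(q/\varphi(q))^k\,m(m-1)\cdots(m-k+1)$; expanding the falling factorial and using $qm=N+O(q)$ together with $N\ge 2q$ gives $\left(N/\varphi(q)\right)^k+O\!\left(\frac{q}{\varphi(q)}\left(\frac{N}{\varphi(q)}\right)^{k-1}\right)$, and since $q/\varphi(q)\ll\log\log q$ this error is of the required size. For $2\le j\le k$ I would bound $R_j(N;q,a)$ by (\ref{eq:crudeupper2}) and use $(m-j)\cdots(m-k+1)\ll(N/q)^{k-j}$; the $j$-th term is then
\begin{align*}
\ll\left(\frac{N}{\varphi(q)}\right)^{k-\frac{j}{2}}\left(\log\frac{N}{q}\right)^{\frac{j}{2}}+\left(\frac{N}{\varphi(q)}\right)^{k-j}\left(\frac{N}{q}\right)^{\frac{j}{2}-\frac{1}{7j}+\epsilon},
\end{align*}
the first piece being present only for even $j$ (as $\mu_j=0$ otherwise). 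Since $\log(N/q)<N/q\le N/\varphi(q)$, both pieces are $\ll\left(N/\varphi(q)\right)^{k-1}\log(N/q)$ for every $j\ge 2$, which is absorbed by the claimed error.

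No delicate analytic input survives once Lemma~\ref{lem:sumofsingii} and (\ref{eq:crudeupper2}) are in hand; the part demanding the most care is the combinatorial bookkeeping of the second step --- the subset inversion and the count of ways to fill in the dropped coordinates --- together with checking that all error contributions collapse uniformly into $O\!\left(\left(N/\varphi(q)\right)^{k-1}\log(N/q)\right)$ over $0\le j\le k$. (Strictly, for very smooth $q$ one should read the error term with $\log(N/q)$ replaced by $\log(N/q)+\log\log q$; in the range $\varphi(q)\asymp N/\log N$ relevant to Theorem~\ref{thm:poisson} one has $N/q\gg\log N/\log\log q$, so the two coincide.)
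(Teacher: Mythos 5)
Your proposal is correct and follows essentially the same route as the paper: expand $\mathfrak{S}(\mathcal{N};q)$ over subsets in terms of $\mathfrak{S}_0$, count the freely ranging coordinates via a falling factorial, take the main term from the empty subset, and absorb the terms with $j\ge 2$ using the crude bound (\ref{eq:crudeupper2}). Your closing caveat about very smooth $q$ is a fair observation that applies equally to the paper's own statement and is harmless in the range $\varphi(q)\asymp N/\log N$ where the lemma is applied.
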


\begin{proof}
By definition, we have
\begin{align*}
\mathfrak{S}(\mathcal{N};q)=
\sum_{\mathcal{M}\subseteq \mathcal{N}}
\left( \frac{q}{\varphi(q)} \right)^{k-|\mathcal{M}|}
\mathfrak{S}_0(\mathcal{M};q).\
\end{align*}
Let $R_{k}(N;q,a)$ denote the sum
\begin{align*}
\sum_{\substack{1 \leq n_{i} \leq N
\\ n_{i} \equiv a \Mod{q}\\ n_i \text{ \normalfont distinct}}}
\mathfrak{S}_0\left( \mathcal{N};q \right).
\end{align*}
Then
\begin{gather*}
\sum_{\substack{1 \leq n_i\leq N\\n_i \equiv a \Mod{q}\\ n_i \text{ \normalfont distinct}}}
\mathfrak{S}(\mathcal{N};q)=
\sum_{\mathcal{M}\subseteq \mathcal{N}}
\left( \frac{q}{\varphi(q)} \right)^{k-|\mathcal{M}|}
\prod_{j=|\mathcal{M}|}^{k-1}
\left(H-j\right)
\sum_{\substack{1 \leq m_i\leq N\\m_i \equiv a \Mod{q}\\m_i \text{ distinct}}}
\mathfrak{S}_0(\mathcal{M};q)\\
=\left( \frac{q}{\varphi(q)} \right)^{k} H^k+
{k \choose 2}
\left( \frac{q}{\varphi(q)} \right)^{k-2} 
H^{k-2}R_2(N;q,a) +O \left( \frac{q}{\varphi(q)}
H^{k-1}
\right)\\
+O \left( \sum_{j=3}^k \left(\frac{q}{\varphi(q)}\right)^{k-j}
H^{k-j} |R_{j}(N;q,a)|  \right)
,
\end{gather*}
where $H:=|\{ 1 \leq n \leq N \,: \, n \equiv a \Mod{q} \}|.$
Then, the lemma follows readily from Lemma \ref{lem:expsumii} and Lemma \ref{lem:sumofsingii}.
\end{proof}

\begin{proof}[Proof of Theorem \ref{thm:poisson}]
When $k=1,$ one can verify easily using the prime number theorem. Therefore, we can assume $k \geq 2.$ By the definition of Stirling numbers of the second kind, we have
\begin{align*}
\frac{1}{\varphi(q)}\sideset{}{^*}\sum_{a \Mod{q}}
\pi(N;q,a)^k
&= \frac{1}{\varphi(q)}\sideset{}{^*}\sum_{a \Mod{q}}
 \sum_{\substack{1 \leq p_1,\ldots,p_k\leq N\\p_1 \equiv \cdots \equiv p_k \equiv a \Mod{q}}}1 \\
&=\frac{1}{\varphi(q)}\sideset{}{^*}\sum_{a \Mod{q}}
\sum_{j=1}^k \stirling{k}{j} j!
\sum_{\substack{1 \leq p_1<\cdots < p_j \leq N\\p_1 \equiv \cdots \equiv p_j \equiv a \Mod{q}}}1.
\end{align*}
Making the change of variables $p_i=n+d_i$ for $i=1,\ldots,j,$ this becomes
\begin{gather} 
\frac{1}{\varphi(q)}\sideset{}{^*}\sum_{a \Mod{q}}\sum_{j=1}^k \stirling{k}{j} j!
 \sum_{\substack{0=d_1 <\cdots <d_j < N\\d_i \equiv 0 \Mod{q}\\ }}
\sum_{\substack{n\,:\, 1 \leq n+d_i \leq N\\ n \equiv a \Mod{q}}}
\prod_{i=1}^j 1_{\mathbb{P}}(n+d_i) \nonumber\\
=\frac{1}{\varphi(q)}\sum_{j=1}^k \stirling{k}{j} j!
 \sum_{\substack{0=d_1 <\cdots <d_j < N\\d_i \equiv 0 \Mod{q}\\ }}
\sum_{\substack{n\,:\, 1 \leq n+d_i \leq N\\ (n,q)=1}}
\prod_{i=1}^j 1_{\mathbb{P}}(n+d_i) \nonumber\\
=\frac{1}{\varphi(q)}\sum_{j=1}^k \stirling{k}{j} j!
\sum_{\substack{0=d_1 <\cdots <d_j < N\\d_i \equiv 0 \Mod{q}\\ }}
\sum_{\substack{1 \leq n \leq N-d_j\\ (n,q)=1}}
\prod_{i=1}^j 1_{\mathbb{P}}(n+d_i) \nonumber\\
=\frac{1}{\varphi(q)}\sum_{\substack{1 \leq n \leq N\\(n,q)=1}}1_{\mathbb{P}}(n)
+\Sigma_1+\Sigma_2, \label{eq:3sums}
\end{gather}
where
\begin{align*}
\Sigma_1:=
\frac{1}{\varphi(q)}\sum_{j=2}^k \stirling{k}{j} j!
\sum_{\substack{0=d_1 <\cdots <d_j < N-q\\d_i \equiv 0 \Mod{q}\\ }}
\sum_{\substack{1 \leq n \leq N-d_j\\ (n,q)=1}}
\prod_{i=1}^j 1_{\mathbb{P}}(n+d_i)
\end{align*}
and $\Sigma_2$ is the sum of the remaining terms, i.e., $0=d_1<\cdots<d_{j-1}<N-q \leq d_j.$

Let us deal with $\Sigma_2$ first. Since $1 \leq N-d_j \leq q,$ assuming Conjecture \ref{conj:ii}, the sum $\Sigma_2$ is
\begin{align*}
\ll \frac{q}{\varphi(q)} \times \max_{2 \leq j \leq k} \frac{1}{\log^j q}
\sum_{\substack{0=d_1 <\cdots <d_{j-1} < N-q\\d_i \equiv 0 \Mod{q}\\ }}
\mathfrak{S}(\mathcal{D};q)+\frac{1}{\varphi(q)}
\left( \frac{N}{q} \right)^{k-1} N^{1-\delta},
\end{align*}
where $\mathcal{D}=\{ d_1, \ldots, d_j \}.$
Using (\ref{eq:usefulater}) with $y=N/q,$ i.e., $\mathfrak{S}(\mathcal{D};q) \ll (\varphi(q)/q)^{-k}(\log (N/q))^{k-1},$ and the assumption $\varphi(q) \asymp N/\log N,$ the first term is
\begin{align} \label{eq:sigma2}
\ll \left( \frac{\varphi(q)}{q} \right)^{-(k+1)} \left( \log \frac{N}{q} \right)^{k-1} \times \frac{1}{\log^2 q}  \ll \frac{(\log \log N)^{2k}}{\log^2 N}.
\end{align}
Also, the second term is $\ll N^{-\delta/2},$ which is negligible.

It remains to estimate $\Sigma_1.$
Since $N/\log N \ll q \leq N-d_j \leq N$ for any $\mathcal{D},$ assuming Conjecture \ref{conj:ii}, the sum $\Sigma_1$ is
\begin{align*} 
\frac{1}{\varphi(q)}\sum_{j=2}^k \stirling{k}{j} j!
\sum_{\substack{0=d_1 <\cdots <d_j < N-q\\d_i \equiv 0 \Mod{q}\\ }}
 \left( 1+O \left( \frac{1}{\log N} \right) \right)
\frac{\mathfrak{S}(\mathcal{D};q)}{\log^j N} \sum_{\substack{1 \leq n \leq N-d_j\\(n,q)=1}}1+O ( N^{-\frac{\delta}{2}} ).
\end{align*}
Arguing as above, this is
\begin{gather} 
\frac{1}{\varphi(q)}\sum_{j=2}^k \stirling{k}{j} j!
\sum_{\substack{0=d_1 <\cdots <d_j < N-q\\d_i \equiv 0 \Mod{q}\\ }}
 \left( 1+O \left( \frac{1}{\log N} \right) \right)
\frac{\mathfrak{S}(\mathcal{D};q)}{\log^j N} \sum_{\substack{1 \leq n \leq N-q-d_j\\(n,q)=1}}1 \nonumber\\
+O \left( \frac{(\log \log N)^{2k}}{\log^2 N} \right)+O(N^{-\frac{\delta}{2}}). \label{eq:detdet}
\end{gather}

Finally, since 
\begin{align*}
\frac{1}{\varphi(q)}\sum_{\substack{1 \leq n \leq N\\(n,q)=1}}1_{\mathbb{P}}(n)
=\frac{1}{\varphi(q)}\left(1+O \left( \frac{1}{\log N} \right) \right)
\frac{1}{\log N}
\sum_{\substack{1 \leq n \leq N-q\\(n,q)=1}} 1
\end{align*}
by the prime number theorem, combining with (\ref{eq:3sums}), (\ref{eq:sigma2}) and (\ref{eq:detdet}) yields
\begin{gather*}
\frac{1}{\varphi(q)}\sideset{}{^*}\sum_{a \Mod{q}}
\pi(N;q,a)^k \\
=
\frac{1}{\varphi(q)}\sum_{j=1}^k \stirling{k}{j} j!
\sum_{\substack{0=d_1 <\cdots <d_j < N-q\\d_i \equiv 0 \Mod{q}\\ }}
 \left( 1+O \left( \frac{1}{\log N} \right) \right)
\frac{\mathfrak{S}(\mathcal{D};q)}{\log^j N} \sum_{\substack{1 \leq n \leq N-q-d_j\\(n,q)=1}}1 \\
+O \left( \frac{(\log \log N)^{2k}}{\log^2 N} \right)+O(N^{-\frac{\delta}{2}}).
\end{gather*}
Note that the error terms here are negligible. Making the change of variables $n_i=n+d_i$ for $i=1,\ldots,j,$ followed by separating into residue classes, the main term becomes
\begin{gather*}
\frac{1}{\varphi(q)}\sum_{j=1}^k\stirling{k}{j} 
\left( 1+O \left( \frac{1}{\log N} \right) \right) \frac{1}{\log^j N} 
\sideset{}{^*}\sum_{a \Mod{q}} 
\sum_{\substack{1 \leq n_i\leq N-q\\n_i \equiv a \Mod{q}
\\n_i\text{ distinct}}}
\mathfrak{S}(\mathcal{N};q)
,
\end{gather*}
which is by Lemma \ref{lem:sum}
\begin{gather*}
\left( 1+O \left( \frac{1}{\log N} \right) \right)\sum_{j=1}^k\stirling{k}{j}  
\left(1+O \left( \frac{\varphi(q)}{N}   \log \left(\frac{N}{q} \right) \right) \right)\left( \frac{N-q}{\varphi(q)\log N} \right)^j\\
=\left( 1 +O\left( \frac{\log\log N}{\log N} \right) \right)\sum_{j=1}^k \stirling{k}{j}
\left( \frac{N}{\varphi(q)\log N} \right)^j
\end{gather*}
since again $\varphi(q) \asymp N/\log N,$ and the theorem follows.
\end{proof}

\section{Some statistics on the least primes in arithmetic progressions} \label{sec:stat}

We begin by fixing the bin width to be $0.1.$ For those displaying the ``probability density function (PDF)", the histograms in orange represent ten times the proportion of $\frac{p(q,a)}{\varphi(q)\log q} \in (t,t+0.1],$ while the curves in blue represent the function $y=e^{-t}.$ 

On the other hand, for those displaying the ``cumulative distribution function (CDF)", the histograms in orange represent the proportion of $\frac{p(q,a)}{\varphi(q)\log q} \leq t,$ while the curves in blue represent the function $y=1-e^{-t}.$

We investigate the moduli $q=2023=7 \times 17^2, 5749$ and $30030=2 \times 3 \times 5 \times 7 \times 11 \times 13,$ for which the numbers of reduced residue classes (samples) are $1632, 5748$ and $5760$ respectively.

\FloatBarrier 

\begin{figure}[H]
    \centering
    \begin{subfigure}{0.45\textwidth}
        \includegraphics[width=\linewidth]{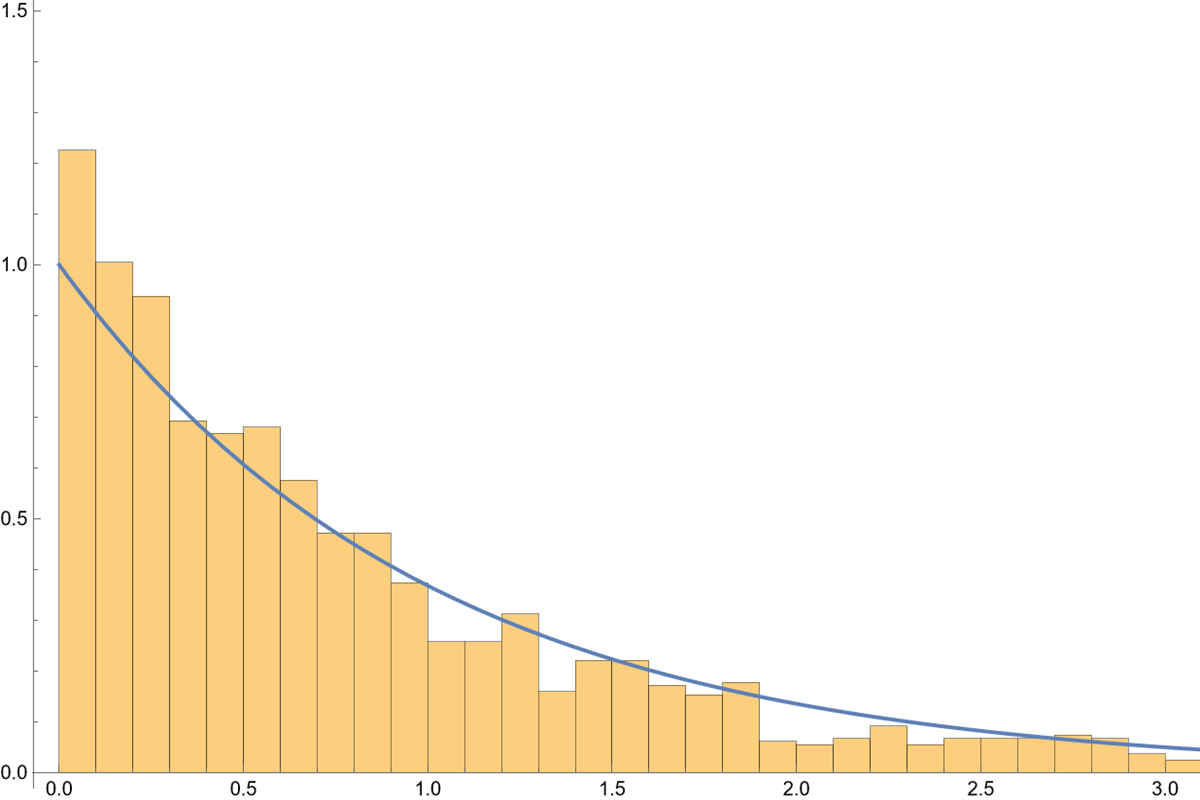}
    \end{subfigure}\hfill
    \begin{subfigure}{0.45\textwidth}
        \includegraphics[width=\linewidth]{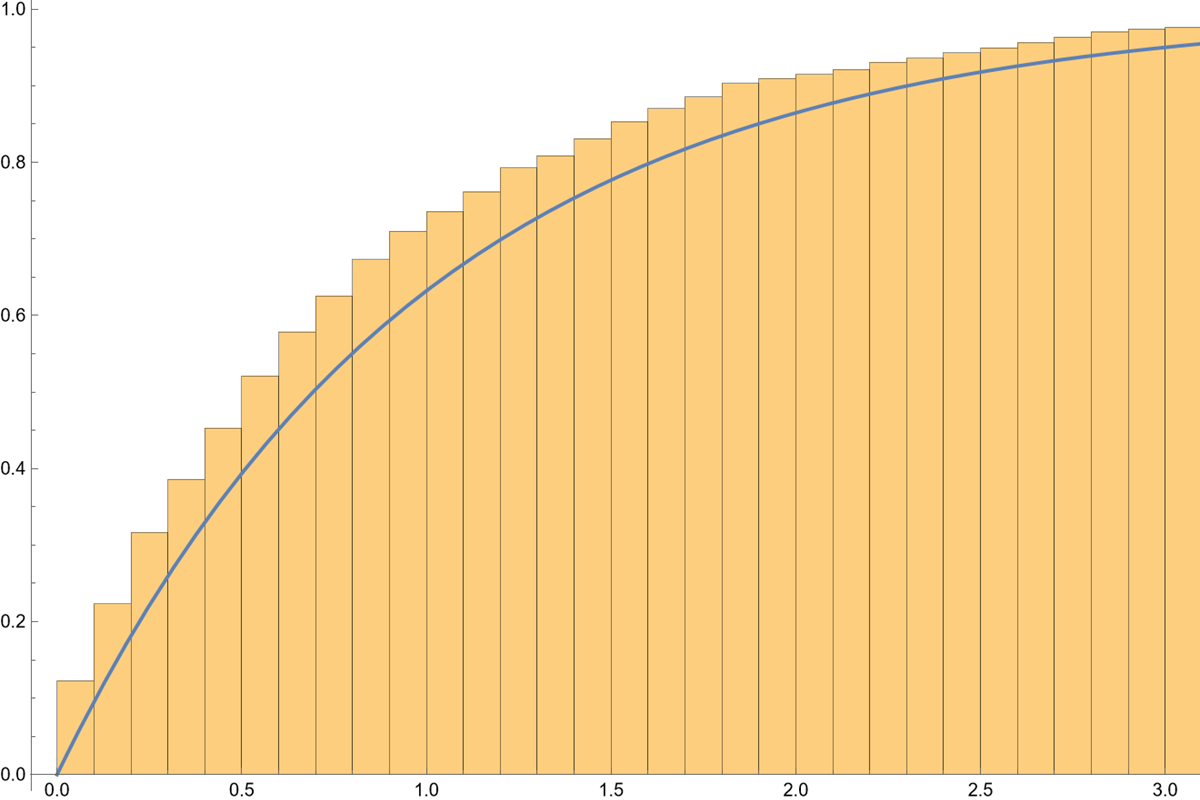}
    \end{subfigure}
    \caption{PDF and CDF when $q=2023$}
\end{figure}

\begin{figure}[H]
    \centering
    \begin{subfigure}{0.45\textwidth}
        \includegraphics[width=\linewidth]{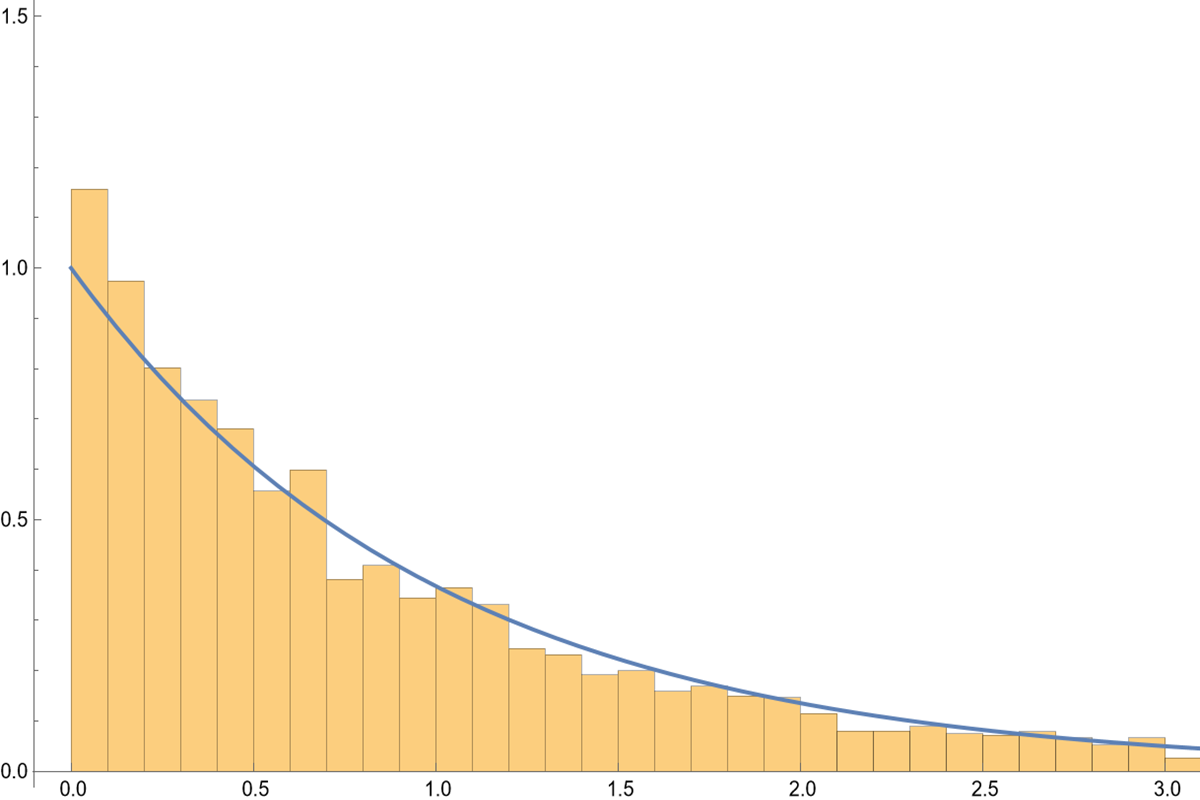}
    \end{subfigure}\hfill
    \begin{subfigure}{0.45\textwidth}
        \includegraphics[width=\linewidth]{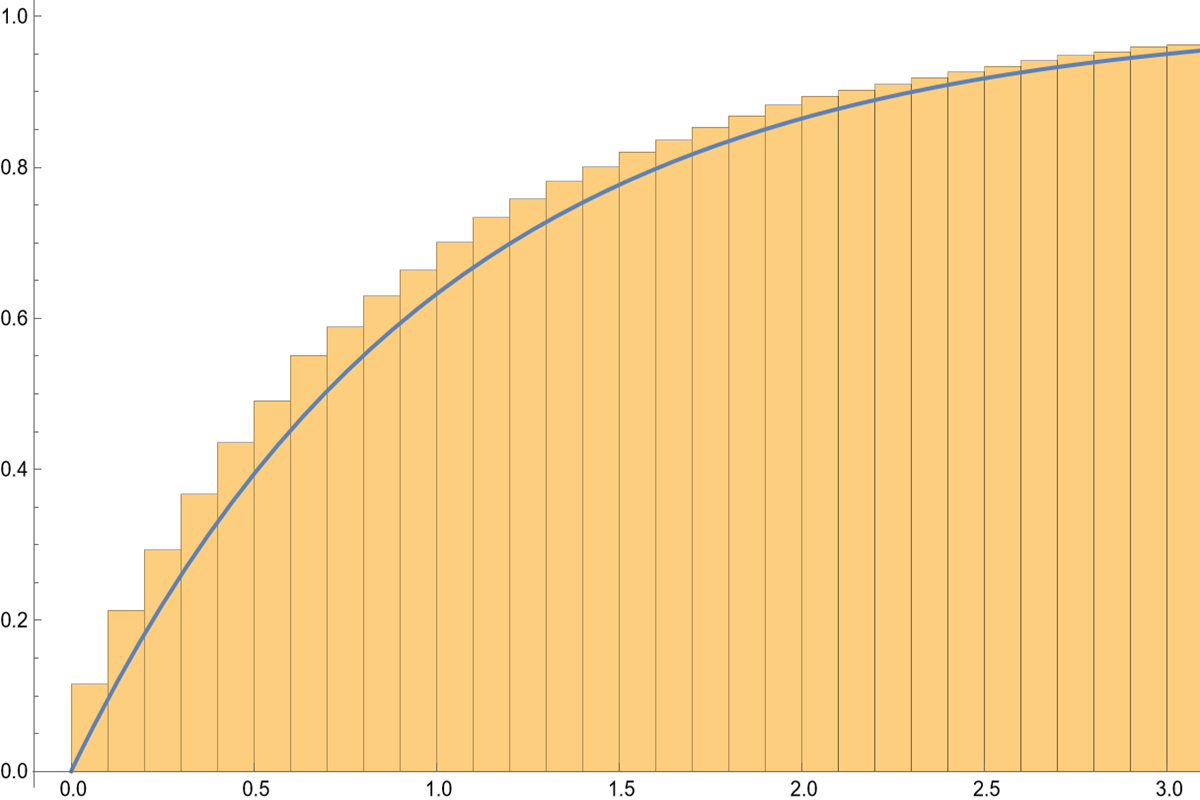}
    \end{subfigure}
    \caption{PDF and CDF when $q=5749$}
\end{figure}

\begin{figure}[H]
    \centering
    \begin{subfigure}{0.48\textwidth}
        \includegraphics[width=\linewidth]{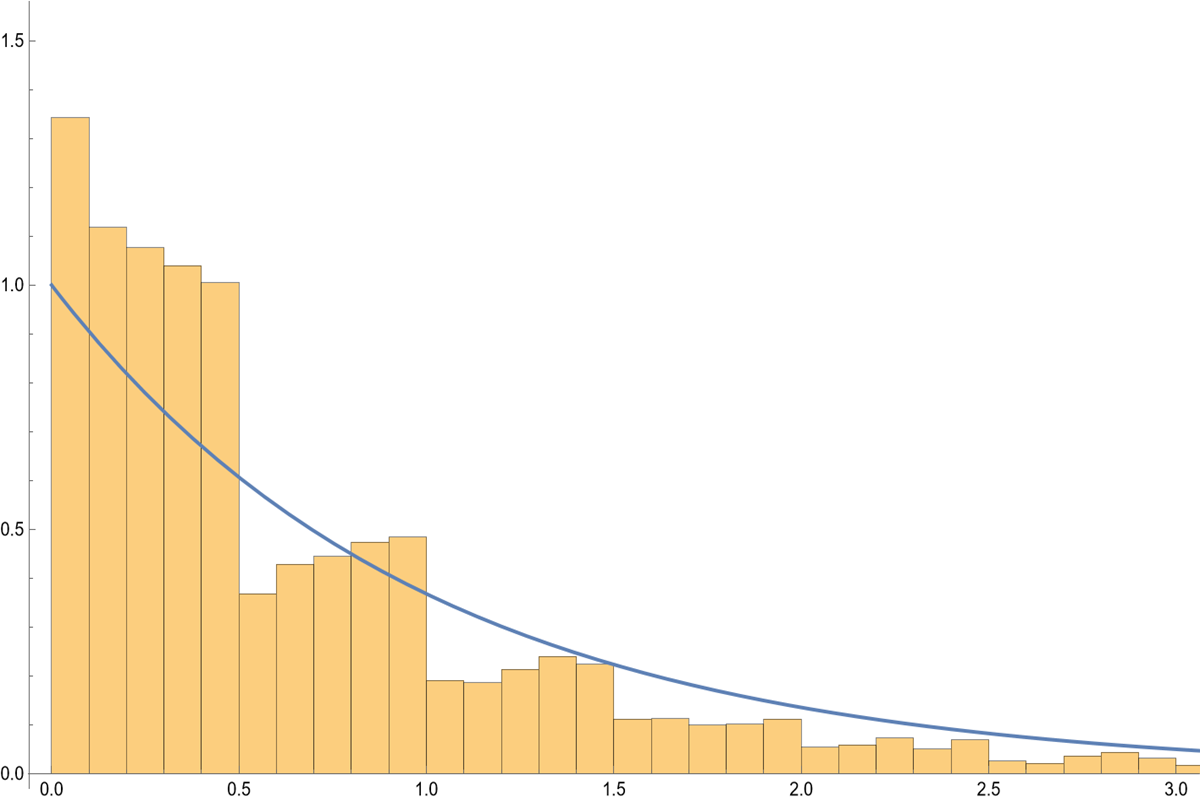}
    \end{subfigure}\hfill
    \begin{subfigure}{0.48\textwidth}
        \includegraphics[width=\linewidth]{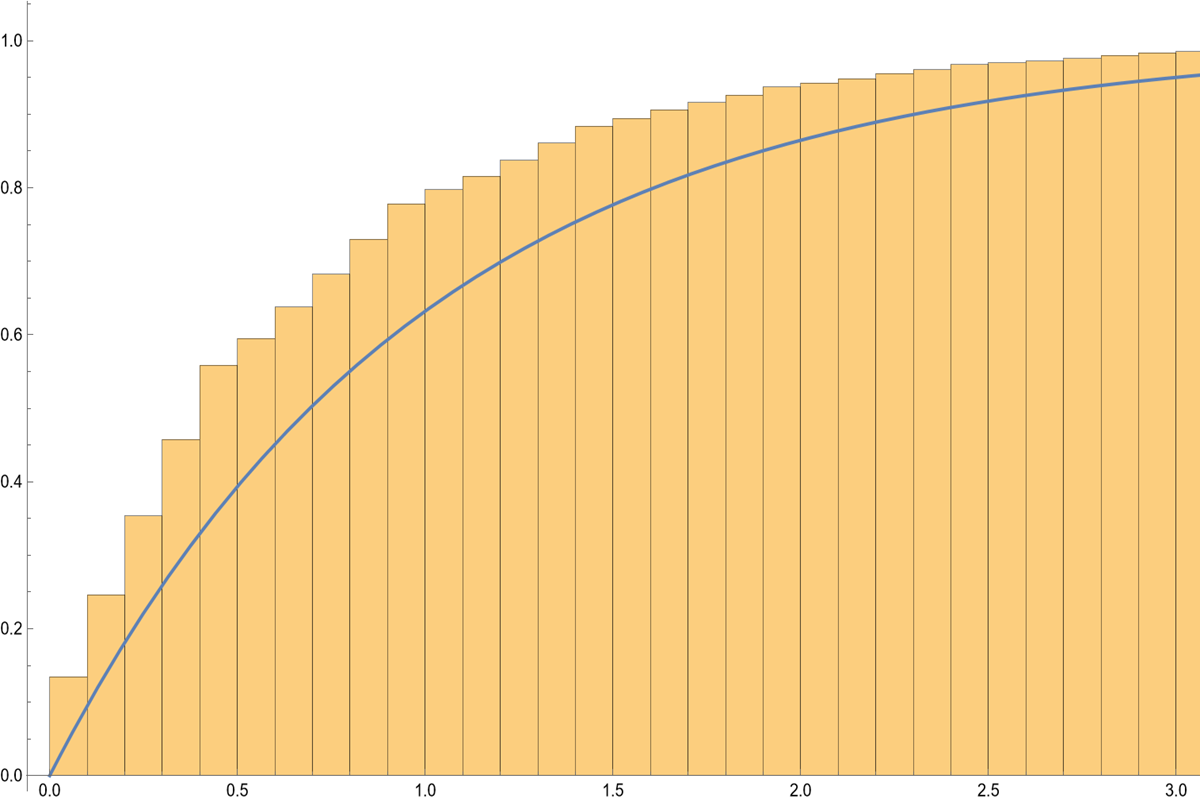}
    \end{subfigure}
    \caption{PDF and CDF when $q=30030$}
\end{figure}

\FloatBarrier 

\section{Unexpected discrepancies} \label{sec:discre}

The final figure raises the question of what accounts for the substantial discrepancies when the modulus $q$ is smooth.

Let $x=t\varphi(q)\log q.$ Then for any $x \leq q \left( \text{or } t \leq \frac{q}{\varphi(q)\log q} \right)$, every prime $\leq x$ must be the least prime in some progressions, so that the proportion of reduced residues $a \Mod{q}$ for which $p(q,a) \leq x=t\varphi(q)\log q$ is 
\begin{gather*}
\sim \frac{\pi(x)}{\varphi(q)} \sim \left( 1+\frac{1}{\log q}\right)t.
\end{gather*}

However, for any $q< x \leq 2q \left( \text{or } \frac{q}{\varphi(q)\log q} <t \leq \frac{2q}{\varphi(q)\log q} \right)$, this is no longer true and under a uniform Hardy--Littlewood conjecture on prime pairs, the proportion of reduced residues becomes
\begin{gather*}
\sim \frac{1}{\varphi(q)}(\pi(x)-\pi_2(x-q;q)) \\
\sim 
\left( 1+\frac{1-\mathfrak{S}(q)}{\log q}\right)\left( t-\frac{q}{\varphi(q)\log q} \right)
+\frac{q}{\varphi(q)\log q}\left(1+\frac{1}{\log q} \right),
\end{gather*}
where 
\begin{align*}
\pi_2(x-q;q):=|\{ p \leq x-q \,:\, p, p+q \text{ are primes}\}|. 
\end{align*}

For \( t \geq 0 \) in general, our modified prediction for the CDF is given by the piecewise continuous linear function \( F \) with a constant slope of 
\[
1 + \frac{1}{\log q} \left( 1 - \sum_{j=1}^k \mathfrak{S}(jq) \right)
\]
on the interval 
\[
\left( \frac{kq}{\varphi(q) \log q}, \frac{(k+1)q}{\varphi(q) \log q} \right]
\]
for each integer \( k \geq 0 \), satisfying the initial condition \( F(0) = 0 \). 

\begin{remark}
Applying Perron's formula (see \cite[Proposition 3]{MR1412558}), it can be verified that for large integers $k$ that do not grow with $q,$ the corresponding slope is
\begin{align*}
\sim 1-\frac{kq}{\varphi(q)\log q} \sim \exp \left( -\frac{kq}{\varphi(q)\log q} \right),
\end{align*}
which matches the slope of the standard exponential distribution's CDF at $t=\frac{kq}{\varphi(q)\log q}.$
\end{remark}

In particular, transitions occur at every multiple of $\frac{q}{\varphi(q)\log q}$, which are noticeable if $q$ is smooth (and not sufficiently large). When $q=30030.$ this quantity is $\approx 0.50568$, which is greater than the bin width $0.1$. Nonetheless, since $\frac{q}{\varphi(q)\log q} \ll \frac{\log \log q}{\log q} \to 0$ as $q \to \infty,$ albeit very slowly, it will eventually be compared with any fixed bin width.

\begin{figure}[h]
    \centering
    \includegraphics[scale=0.45]{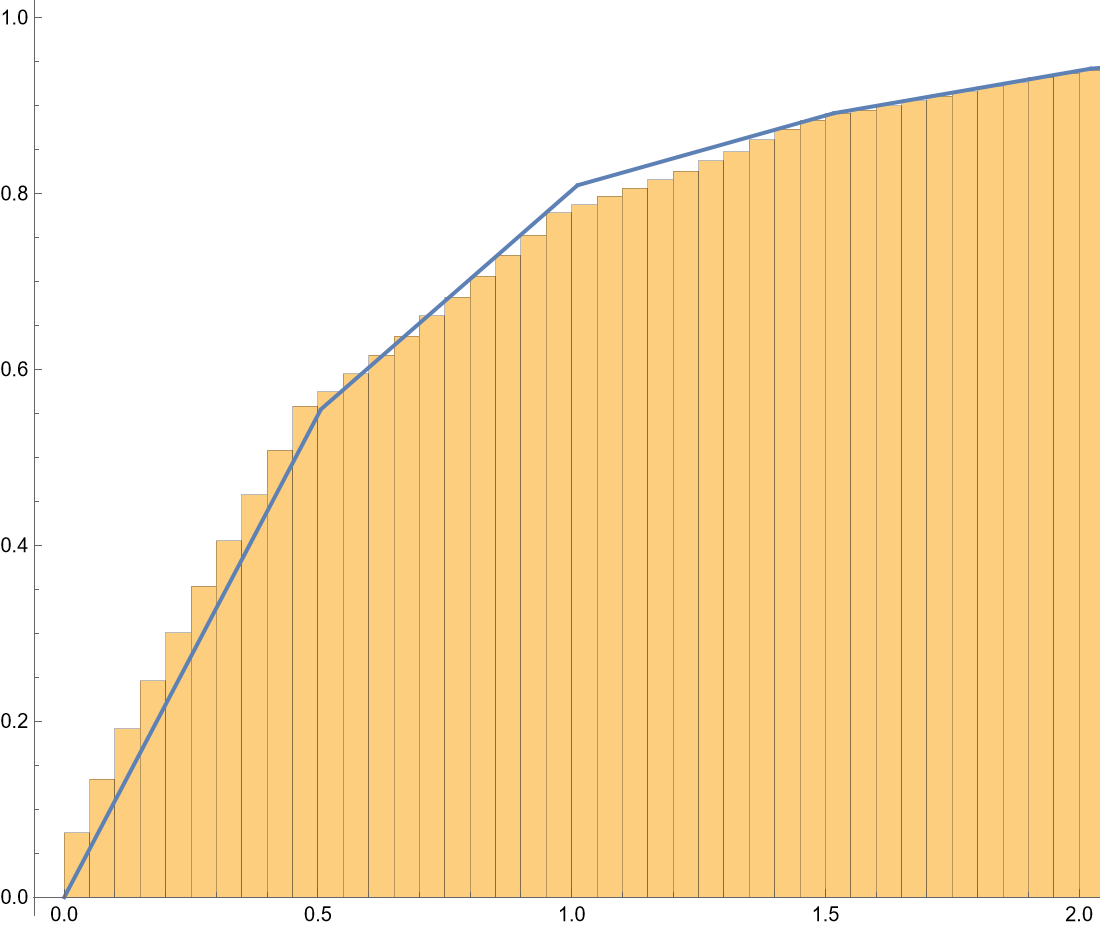}
    \caption{Modified prediction for the CDF when $q=30030$}
\end{figure}




\section{Future work}

Given $m \geq 1$ and $(a,q)=1,$ we are also interested in the $m$-th least prime $p \equiv a \pmod{q},$ which we denote by $p_m(q,a).$ As $q \to \infty,$ not only do we have Corollary \ref{cor:exp}, but we further expect that the normalized primes in progressions 
\begin{gather*}
\frac{p_1(q,a)}{\varphi(q)\log q} < \frac{p_2(q,a)}{\varphi(q)\log q} < \cdots
\end{gather*}
should behave as random points on $(0,\infty)$ as $a \Mod{q}$ varies in the sense of a Poisson point process, which has been explored in \cite{pseudo}.

\section*{Acknowledgements}
The author is grateful to Andrew Granville for his advice and encouragement. He would also like to thank Brad Rodgers, Cihan Sabuncu, and the anonymous referees for their valuable comments. The latter part of this
work was supported by the Swedish Research Council under grant no. 2016-06596 while the author was in residence at Institut Mittag-Leffler in Djursholm, Sweden during the semester of Winter 2024.

\printbibliography

\end{document}